\documentclass[envcountsame,envcountsect]{svmult}


\usepackage{makeidx}         
\usepackage{graphicx}        
\usepackage{multicol}        
\usepackage[bottom]{footmisc}

\makeindex             

 
\usepackage{amsmath,amsfonts,amssymb,latexsym,amscd}
\usepackage{enumerate}
\usepackage{xy}\xyoption{all}

\newcommand{\R}{\mathbb{R}}
\newcommand{\C}{\mathbb{C}} 

\newcommand{\Z}{\mathbb{Z}}

\newcommand{\Flags}{\mathcal B}
\newcommand{\Cell}{\mathcal C}
\newcommand{\BS}{\mathcal B\mathcal S}

\newcommand{\h}{{\mathfrak h}}
\newcommand{\n}{{\mathfrak n}}
\newcommand{\g}{{\mathfrak g}}
\renewcommand{\b}{{\mathfrak b}}

\newcommand{\pt}{\text{\rm pt}}
\DeclareMathOperator{\id}{Id}
\DeclareMathOperator{\Id}{Id}
\newcommand{\red}{\text{\rm red}}
\newcommand{\vb}{\text{\rm vb}}
\newcommand{\comp}{\text{\rm comp}}

\newcommand{\wdg}{\wedge}
\newcommand{\KK}{{\mathbb K \mathbb K}}
\newcommand{\K}{{\rm K}}

\DeclareMathOperator{\Ind}{Ind}
\DeclareMathOperator{\Res}{Res}
\DeclareMathOperator{\IndRes}{IndRes}
\DeclareMathOperator{\Index}{Index}
\DeclareMathOperator{\Th}{Th}

\begin{document}   

\title*{
Weyl  Character Formula in KK-Theory}  
 
\date{\today}

\author{Jonathan Block\inst{1} and Nigel Higson\inst{2}}  

\institute{Department of Mathematics, University of Pennsylvania,  209 South 33rd Street 
Philadelphia, PA 19104, USA.
\texttt{blockj@math.upenn.edu}
\and Department of Mathematics, Penn State University, University Park, PA 16803, USA. \texttt{higson@math.psu.edu}}

\maketitle

\section{Introduction}

The purpose  of this paper is to begin an exploration of  connections between the Baum-Connes conjecture in operator $\K$-theory and the geometric representation theory of reductive Lie groups.  Our initial goal is very modest, and we shall not stray far from the realm of compact groups, where geometric representation theory amounts to elaborations of the Weyl character formula such as the Borel-Weil-Bott theorem.  We shall  recast the topological $\K$-theory approach to the Weyl character formula, due basically to Atiyah and Bott, in the language of Kasparov's $\K\K$-theory \cite{MR582160}.  Then we shall show how, contingent on the Baum-Connes conjecture, our  $\K\K$-theoretic  Weyl character formula can be carried over to noncompact groups.

The current form of the Baum-Connes conjecture is presented in \cite{MR1292018}, and the case of Lie groups is discussed there in some detail.   On the face of it, the conjecture is   removed from traditional issues in representation theory,  since it concerns the $\K$-theory of group $C^*$-algebras, and therefore projective or quasi-projective  modules over group $C^*$-algebras, rather than for example irreducible $G$-modules.  But an important connection with the representation theory of reductive Lie groups was evident from the beginning. The conjecture uses the  \emph{reduced} group $C^*$-algebra, generated by convolution operators on $L^2 (G)$, and as a result discrete series representations are projective in the appropriate $C^*$-algebraic sense.  In fact each   discrete series contributes a distinct generator to $C^*$-algebra $\K$-theory, and the Baum-Connes conjecture is very closely related to the problem of realizing discrete series as spaces of harmonic spinors on the symmetric space associated to $G$ (the quotient of $G$ by a maximal compact subgroup $K$).  Indeed this insight contributed in an important way to the formulation of the conjecture in the first place.  For further discussion see \cite[Sec.\ 4]{MR1292018} or \cite[Sec.\ 2]{MR1957086}.

We shall not be directly concerned here with either discrete series representations or the symmetric space.  Instead the quotient space $G/K$ will appear in connection with a comparison between $K$-equivariant and $G$-equivariant $\K$-theory.   We  shall interpret the Baum-Connes conjecture as the assertion that a natural restriction map from $G$-equivariant $\K\K$-theory to $K$-equivariant $\K\K$-theory is an isomorphism.  This will allow us to carry over calculations for the compact group $K$ to the noncompact group $G$.

We shall need to adopt definitions of $\K\K$-theory that are a bit different from those of Kasparov in \cite{MR582160} and \cite{MR918241}.  We shall take as our starting point the concept, first used by Atiyah \cite{MR0228000}, of a transformation from $\K_G(X)$ to $\K_G(Y)$ that is in a suitable sense continuous at the level of cycles, so that, for example,  it extends to families of cycles and  hence determines  transformations
\[
\K_G(X\times Z)\longrightarrow \K_G (Y\times Z).
\]
The right sort of continuity can be captured fairly easily using the multiplicative structure of $\K$-theory (there are other ways to do it as well).  We define $\KK_G(X,Y)$ to be the abelian group of all   continuous transformations from $\K_G(X)$ to $\K_G(Y)$. 

If $G$ is a \emph{compact} group, then the equivariant $\KK$-theory defined in this way has an extremely straightforward geometric character, sketched out in Section~\ref{sec-kk-theory}. It  amounts to a  framework in which to organize the basic constructions of Atiyah and Hirzebruch such as those in \cite{MR0110106} that    initiated $\K$-theory.  Moreover it happens that for reasonable spaces,  such as the closed complex $G$-manifolds that we shall be considering, we recover Kasparov's theory as a consequence of Poincar\'e dualty.  We hope that our discussion of these things in Section~\ref{sec-kk-theory} will serve as a helpful introduction to $\K\K$-theory for some.

 The utility of $\KK$-theory is illustrated     by the geometric approach to the Weyl character formula for a connected and compact Lie group.  This is basically due to  Atiyah and  Bott in \cite{MR0232406}, but we shall give our own presentation of their ideas in Section~\ref{sec-weyl}.  The main novelty   is the interpretation of the numerator in the Weyl character formula in terms of intertwining operators.  We shall also make additional remarks related to the Demazure character formula \cite{MR0430001} and the determination of the Kasparov ring $\KK_G(\Flags,\Flags)$, where $\Flags$ is the flag variety of $G$.  
 
Here is a rapid summary of our Weyl character formula in equivariant $\KK$-theory (or equivalently in Kasparov's theory, since the groups involved are the same). There are intertwining operators 
\[
I_w \colon \K_G(\Flags)\longrightarrow \K_G(\Flags)
\]
associated to each element of the Weyl group of $G$.  There is a global sections operator 
\[
\Gamma \colon \K_G (\Flags)  \longrightarrow \K_G (\pt ).
\]
given by the wrong-way map in $\K$-theory determined by the collapse of $\Flags$ to a point.  According to the Atiyah-Singer  index theorem, $\Gamma$  maps the $\K$-theory class of a holomorphic $G$-vector bundle $E$ on $\Flags$ to the alternating sum of  the Dolbeault cohomology groups on $\Flags$ with coefficients in $E$.
Finally there is a localization operator
\[
\Lambda \colon  \K_G (\pt )\longrightarrow \K_G (\Flags) .
 \]
 To each point of the flag variety $\Flags$ there is associated a nilpotent subalgebra $\n$ of $\g$, and $\Lambda$ takes a representation $V$ to the alternating sum of the bundles on $X$ with Lie algebra homology fibers $H_*(\n,V)$.
 We obtain elements
 \[
\Lambda \in \KK_G( \pt,\Flags) , \quad \Gamma \in \KK_G (\Flags,\pt) , \quad \text{and}\quad I_w\in \KK_G(\Flags,\Flags) ,
\]
and we shall prove that 
\[
 \Gamma \otimes  \Lambda =\sum _{w\in W} I_w    \,\, \in\,\,   \KK_G(\Flags,\Flags) ,
\]
while
\[
\Lambda \otimes \Gamma = |W| \cdot \id       \,\,\in   \,\, \KK_G(\pt,\pt) .
\]
Here $\otimes$ is the Kasparov product---composition of transformations, but written in the reverse order.
 The connection to Weyl's formula will be clear to experts, but is in any case explained in Section~\ref{sec-comparison}.
 
Now let  $G$ be a connected linear reductive Lie group,     let $\Flags$ be the complex flag variety associated to $G$, and let $K$ be a maximal compact subgroup of $G$.   There is a restriction functor 
\begin{equation}
\label{eq-bc-restr}
\KK_G(X,Y)\longrightarrow \KK_K(X,Y)
\end{equation}
in (our version of) equivariant $\K\K$-theory, and, as we noted above,  the Baum-Connes conjecture is equivalent to the assertion that the restriction functor is an \emph{isomorphism}. So if we assume the Baum-Connes conjecture, then we can argue, by passing  from a compact form $G_\comp$ of $G$ to the subgroup $K\subseteq G_\comp$,  and then to $G$ itself by means of the restriction isomorphism (\ref{eq-bc-restr}), that the Weyl character formulas given above carry over to $G$-equivariant $\K\K$-theory.  In doing so, we shall use the full strength of the $\K\K$-theoretic Weyl character formula.  The resulting formulas could be thought of as a  $\K$-theoretic version of the character formula of Osborne \cite{MR0439996,MR0492083,MR716371}, although we shall not explore that  here.  They also call to mind the mechanisms of geometric representation theory \cite{MR610137,MR804725}, involving as they do a ``localization functor'' $\Lambda$ and a ``global sections functor'' $\Gamma$ that are essentially  inverse to one another (at the level of $W$-invariants).  

Among our longer term goals are refinements of the Weyl character formulas presented here,  perhaps
using concepts from \cite{MR2648899}, that seek to bridge between analytic tools familiar in the Baum-Connes theory and   tools familiar in geometric representation theory. This larger project we are pursuing in collaboration with David Ben-Zvi and David Nadler.

\paragraph{Acknowledgements}  We would like to thank David Ben-Zvi,  Dragan Milicic, David Nadler and David Vogan for    stimulating and helpful conversations on aspects of this paper.   The work reported on here was partially supported by the American Institute of Mathematics through its SQuaREs program, and by grants from the National Science Foundation.

\section{Basic KK-Theory}
\label{sec-kk-theory}

In this section we shall  review some basic constructions in topological $\K$-theory,   and then recast them within $\K\K$-theory.  We shall do so partly  to introduce  $\K\K$-theory from a geometric point of view that might appeal to some, and partly to set the stage for a discussion of the Baum-Connes conjecture in the last section of the paper.

\subsection{Very Basic K-Theory}

If $X$ is a compact Hausdorff space,  then the Atiyah-Hirzebruch topological $\K$-theory group $\K(X)$    is the Grothendieck group of  complex  vector bundles on $X$.   The pullback operation on vector bundles makes $\K(X)$ into a contravariant functor, and indeed a contravariant \emph{homotopy}  functor.  See for example \cite{MR0224083}.  

The $\K$-theory functor extends to the category of locally compact Hausdorff spaces.  In this category  the morphisms from $W$ to $Z$ are the continuous maps from the one-point compactification of $W$ to the one-point compactification of $Z$.  For example if $U$ is an open subset of a locally compact space $X$, then the map that collapses the complement of $U$ in $X$ to the point at infinity gives a map
\begin{equation}
\label{eq-push-forward}
\K(U)\longrightarrow\K(X).
\end{equation}
We'll call this the \emph{pushforward} from $\K(U)$ to $\K(X)$.

A compactly supported and bounded complex of vector bundles
\begin{equation}
\label{eq-cplx-over-w}
E_0\longleftarrow E_1 \longleftarrow \cdots \longleftarrow E_k
\end{equation}
over a locally compact space $W$  determines an element of $\K(W)$.  Here the \emph{support}  of a complex is the complement of the largest open set  in the base  over which each complex of fibers is exact.  In fact $\K(W)$ may be completely described in terms of such complexes; see \cite{MR0234452} or \cite{MR0236950}.

\begin{example} 
\label{ex-thom}
Let  $V$ be a   vector bundle over a locally compact base space.  Consider the following complex over the total space of $V$, in which the indicated bundles have been pulled back from the base to the total space:
\begin{equation}
\label{eq-bott-cplx}
\wdg ^0 V^* \stackrel{\iota_r}\longleftarrow \wdg ^1 V^*  \stackrel{\iota_r}\longleftarrow \cdots  \stackrel{\iota_r}\longleftarrow \wdg ^{\text{top}} V^* .
\end{equation}
The differentials at $v\in V$ are given by contraction with the \emph{radial field} on $V$.  This is the section of the pullback of $V$  whose value at the point $v$ is the vector $v$.    The complex \eqref{eq-bott-cplx} is exact except at the set of zero vectors.  So if the base  is compact, then the  complex is compactly supported and hence determines an element  of $\K(V)$, called the \emph{Thom element}. As a special case, if the original base space is a point, then we obtain   the \emph{Bott element} in $\K(V)$. 
\end{example}

Returning to the case of a general complex (\ref{eq-cplx-over-w}),   we obtain a  class in $\K(U)$ for any open subset $U$ of the base that contains the support of the complex. These $\K(U)$-classes are related to one another (as the open set $U$ varies) by   pushforward maps.

\subsection{Products and KK-Groups}
Continuing our review, the tensor product operation on vector bundles over compact spaces determines a functorial and associative product operation
\begin{equation}
\label{eq-k-theory-prod}
\K(X)\otimes \K(Y)\longrightarrow \K(X\times Y).
\end{equation}
We'll denote the product   by $E\boxtimes F$.  The   product of two $\K$-theory classes given by compactly supported complexes is given by the tensor product of complexes.

Combining (\ref{eq-k-theory-prod}) with restriction to the diagonal in $X\times X$, we find that  $\K(X)$ carries the structure of a commutative ring.  More generally, associated to a map  $Y\to X$ there is a $\K(X)$-module structure on $\K(Y)$.  We'll write this product as $E\cdot F$.  If $X$ is compact then the class $1_X\in \K(X)$ of the trivial rank-one bundle is a multiplicative identity in $\K(X)$.  

For the purposes of the following definition  fix the locally compact spaces $X$ and $Y$ and consider the groups 
\[
\K(X\times Z)\quad \text{and} \quad \K(Y\times Z)
\]
as contravariant functors in the $Z$-variable.  

\begin{definition}
\label{def-cont-transf}
A natural transformation 
\begin{equation}
\label{eqn-cont-tr1}
T_Z \colon \K(X\times Z)\longrightarrow \K(Y\times Z)   
\end{equation}
is \emph{continuous} if each $T_Z$ is a $\K(Z)$-module homomorphism.
\end{definition}

 The term ``continuous'' has been chosen to suggest that $T$ is a means to construct (virtual) vector bundles on $Y$ from vector bundles on $X$ that is continuous in the sense that it extends to families (think of a vector bundle on $X\times Z$ as a family of vector bundles on $X$ parametrized by $Z$).

\begin{lemma}
\label{rem-equiv-cont-lemma}
A natural transformation  $T$ is   {continuous} if and only if all diagrams of the form
\begin{equation}
\label{eqn-cont-tr2}
\begin{matrix}
 K(X\times Z)\otimes K(W)  &\xrightarrow{T_Z\otimes \id}  &  K (Y\times Z)  \otimes K(W) \\
 \downarrow && \downarrow \\
  K(X\times Z\times W) & \xrightarrow[T_{Z\times W}]{}& K (Y\times Z\times W)
  \end{matrix}
\end{equation}
are commutative.
\end{lemma}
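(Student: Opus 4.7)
The plan is to argue each direction by a short diagram chase that uses the naturality of $T$ together with the definition of $\boxtimes$ as pullback-and-multiply. The key underlying identity is that an external product $c \boxtimes d \in \K(A\times B)$ can be written as $p_A^*(c)\cdot p_B^*(d)$ using the two coordinate projections, and that pullbacks along projections are precisely what encode the relevant $\K$-module structures.

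For the forward direction, assume each $T_Z$ is a $\K(Z)$-module homomorphism. Given $a \in \K(X\times Z)$ and $b \in \K(W)$, I would rewrite $a\boxtimes b \in \K(X\times Z\times W)$ as $\tilde a \cdot \tilde b$, where $\tilde a$ is the pullback of $a$ along $X\times Z\times W \to X\times Z$ and $\tilde b$ is the pullback of $b$ along $X\times Z\times W \to W$. Because $\tilde b$ is pulled back from a class in $\K(Z\times W)$, continuity of $T$ (namely the $\K(Z\times W)$-module property of $T_{Z\times W}$) allows me to move it past $T_{Z\times W}$. Naturality of $T$ along the projection $Z\times W \to Z$ then identifies $T_{Z\times W}(\tilde a)$ with the pullback of $T_Z(a)$ to $\K(Y\times Z\times W)$, and reassembling the factors yields $T_Z(a)\boxtimes b$.

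For the converse, specialize the diagram to $W = Z$ and $b = f \in \K(Z)$, producing the identity $T_{Z\times Z}(a\boxtimes f) = T_Z(a)\boxtimes f$. I would then pull this identity back along the diagonal maps $\id_X \times \Delta \colon X\times Z \to X\times Z\times Z$ and $\id_Y \times \Delta \colon Y\times Z \to Y\times Z\times Z$, using naturality of $T$ along $\Delta$ to commute the pullback past $T$. The only thing to verify is that the pullback of $c\boxtimes f$ along $\id \times \Delta$ equals the internal product $f\cdot c$; this is immediate from the construction of the $\K(Z)$-module structure via pullback along the projection to $Z$. The resulting equality $T_Z(f\cdot a) = f\cdot T_Z(a)$ is precisely the continuity condition.

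The main obstacle is purely bookkeeping: keeping track of the interaction between the coordinate projections defining $\boxtimes$, the projection $X\times Z \to Z$ defining the $\K(Z)$-action, and the diagonal $\Delta \colon Z \to Z\times Z$. Neither direction uses anything beyond the $\K$-module and naturality axioms already in place, and no finiteness or geometric hypotheses on the spaces enter the argument.
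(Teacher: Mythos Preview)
Your approach is essentially the same as the paper's: for the converse you set $W=Z$ and restrict along the diagonal, and for the forward direction you combine naturality of $T$ along the projection $Z\times W\to Z$ with the $\K(Z\times W)$-linearity of $T_{Z\times W}$.

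There is, however, one technical point you gloss over. In the locally compact category used here, the morphisms are continuous maps between one-point compactifications; the projection $Z\times W\to Z$ (and hence also $X\times Z\times W\to X\times Z$) is a morphism in this sense only when $W$ is compact. So your step ``rewrite $a\boxtimes b$ as $\tilde a\cdot\tilde b$ via pullbacks along the coordinate projections'' and your appeal to ``naturality of $T$ along the projection $Z\times W\to Z$'' are not available for general locally compact $W$. The paper's proof makes exactly this reduction explicit, noting that it suffices to treat the case of compact $W$ before invoking naturality along the projection. Your closing remark that ``no finiteness or geometric hypotheses on the spaces enter the argument'' is therefore not quite right: a compactness reduction for $W$ is needed, though it is routine (for instance via the description of $\K(W)$ in terms of compactly supported complexes).
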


\begin{proof}
To recover Definition~\ref{def-cont-transf} from  (\ref{eqn-cont-tr2}), set $W=Z$  and then restrict to the diagonal in $Z\times Z$.  In the reverse direction, it suffices to prove  (\ref{eqn-cont-tr2})  from Definition~\ref{def-cont-transf} when  $W$ is compact.  Consider the commuting diagram 
\begin{equation*}
\begin{matrix}
 K(X\times Z)   &\xrightarrow{T_Z}  &  K (Y\times Z)  \otimes K(W) \\
 \downarrow && \downarrow \\
  K(X\times Z\times W) & \xrightarrow[T_{Z\times W}]{}& K (Y\times Z\times W)
  \end{matrix}
\end{equation*}
in which the vertical maps are induced from the projection $Z\times W\to Z$.  The commutativity of (\ref{eqn-cont-tr2}) follows from the fact that the bottom horizontal map is a $K(Z\times W)$-homomorphism. \qed
\end{proof}

\begin{definition}
Denote by $\KK(X,Y)$ the abelian group of continuous transformations from $\K(X)$ to $\K(Y)$. 
\end{definition}

\begin{remark}
This is  \emph{not} Kasparov's definition of $\K\K$-theory \cite{MR582160}.  However Kasparov's $\K\K$-groups are closely related to our $\KK$-groups, and indeed they are the same in many cases; see Section~\ref{sec-kasp-theory}.
\end{remark}

It is evident that there is an associative   ``product'' operation
\begin{equation}
\label{eq-kasp-prod}
\KK (X,Y)\otimes \KK(Y,Z)\longrightarrow \KK (X,Z) 
\end{equation}
given by composition of transformations.  
This is the \emph{Kasparov product}, and using it we obtain an additive category whose objects are locally compact spaces and whose morphisms are continuous transformations. 
However the  Kasparov product of elements 
\[
S\in \KK(X,Y)\quad \text{and}\quad T \in \KK(Y,Z)
\]
 is conventionally denoted  
 \begin{equation}
 \label{eq-kasp-prod2}
 S \otimes T\in \KK (X,Z),
 \end{equation}
  which is opposite to the notation  $T\circ S$ for composition of morphisms in a category.    
 
In addition to the Kasparov product (\ref{eq-kasp-prod}), there is by definition a pairing 
\begin{equation}
\label{eq-kasp-prod-k-thy}
\K(X) \otimes \KK(X,Y)\longrightarrow \K(Y) 
\end{equation}
given by the action of continuous transformations on $\K$-theory.  We'll use the same symbol $\otimes$ for it, and indeed  it can be viewed as a special case of the Kasparov product since the evaluation of the pairing 
\[
\K(\pt) \otimes \KK(\pt,Y)\longrightarrow \K(Y) 
\]
on the unit element $1_\pt\in \K(\pt)$ gives an  isomorphism of abelian groups
\[
\KK(\pt, Y)\stackrel \cong \longrightarrow \K(Y) .
\]

Some   examples of continuous transformations, and hence $\KK$-classes, immediately suggest themselves.

\begin{example}
 If $f\colon Y\to X$ is a continuous and proper map, then the pullback construction for vector bundle determines a class 
\begin{equation}
\label{eq-pullback-class}
[X\stackrel{f}{\leftarrow} Y] \in \KK (X,Y) .
\end{equation}
If $g\colon Z\to Y$ is another continuous and proper map, then
\[
[X\stackrel{f}{\leftarrow} Y] \otimes [Y\stackrel{g}{\leftarrow} Z]  = [X\stackrel{f\circ g}{\leftarrow} Z]  .
\]
\end{example}

\begin{example}
A complex vector bundle $E$  on a compact space $X$, or more generally a class in $\K(X)$, where $X$ is  any locally compact space, determines a continuous  transformation  
\begin{equation}
\label{eq-vb-class}
[E]\in \KK(X,X)
\end{equation}
by multiplication using the product $\cdot$  in $\K$-theory.\footnote{We shall reserve the square bracket notation $[\,\,\,]$ for  $\KK$-classes, and avoid it for  $\K$-theory classes.}   We obtain in this way an injective  
ring homomorphism
\begin{equation}
\label{eq-vb-class2}
 \K( X)\longrightarrow  \KK(X,X) ,
\end{equation}
where the multiplication in the ring  $\KK(X,X)$ is of course the Kasparov product.\end{example}

\begin{example}
If $V$ is a complex vector bundle over a locally compact space $W$, then the \emph{Thom homomorphism}
\[
\Th\colon \K(W)\longrightarrow \K(V)
\]
is given by pullback to $V$ (of vector bundles or complexes of vector bundles initially defined on $W$) followed by product with the Thom complex (\ref{eq-bott-cplx}). Note that even when $W$ is merely locally compact, the result is a compactly supported complex, because the support of a tensor product of complexes is the intersection of the supports of the factors.
If $Z$ is an auxiliary space, then by pulling back $V$ to $W\times Z$ we obtain in addition a Thom homomorphism 
\begin{equation}
\label{eq-thom-kk-class}
\Th_Z\colon \K(W\times Z)\longrightarrow \K(V\times Z) .
\end{equation}
Collectively these define a continuous transformation $\Th\in \KK(W,V)$. 
\end{example}

\begin{remark}
The concept of continuous transformation (although not the name)  has its origins  in  Atiyah's elliptic operator proof of Bott periodicity \cite{MR0228000}.  Let $V$ be a finite-dimensional complex vector space.  According to Atiyah's famous rotation trick,   if two continuous transformations 
\[
\K(\pt)\longrightarrow \K(V) \quad \text{and} \quad \K(V)\longrightarrow \K(\pt)
\]
compose to give the identity on $\K(\pt)$, then they compose the other way to give the identity on $\K(V)$ (and thereby implement Bott periodicity).
\end{remark}

 For later purposes we note the following fact: 
 
\begin{lemma} 
\label{lemma-thom-funct}
Every continuous transformation  is functorial with respect to Thom homomorphisms.
\end{lemma}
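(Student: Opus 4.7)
The plan is to express the Thom homomorphism as a composition of two elementary operations that every continuous transformation automatically commutes with: an external product with the Thom class, and a pullback along a proper map on the auxiliary variable.

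Let $V \to W$ be a complex vector bundle with projection $p$ and Thom class $\tau_V \in \K(V)$, and let $T \in \KK(X, Y)$. For each locally compact space $Z$ write $\Th^Z \colon \K(Z \times W) \to \K(Z \times V)$ for the auxiliary-$Z$ Thom homomorphism. The assertion to prove is that $\Th^Y \circ T_W = T_V \circ \Th^X$. Consider the map $g \colon V \to W \times V$, $v \mapsto (p(v), v)$: it is a closed embedding and is proper (the preimage of a compact set is contained in the second-factor projection of that set), so it induces pullbacks $(\id_Z \times g)^* \colon \K(Z \times W \times V) \to \K(Z \times V)$ for any $Z$. The first step is to verify the factorization
$$\Th^Z(a) = (\id_Z \times g)^*(a \boxtimes \tau_V), \qquad a \in \K(Z \times W),$$
which follows by comparing fibers at $(z, v)$, where both sides restrict to $a_{(z, p(v))} \otimes (\tau_V)_v$.

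With this factorization in hand, the lemma reduces to two commutations. Lemma~\ref{rem-equiv-cont-lemma}, applied with auxiliary space $V$, gives
$$T_{W \times V}(a \boxtimes \tau_V) = T_W(a) \boxtimes \tau_V, \qquad a \in \K(X \times W),$$
so $T$ commutes with the external product by $\tau_V \in \K(V)$. Naturality of $T$ in its auxiliary variable, applied to the proper map $g$, gives $T_V \circ (\id_X \times g)^* = (\id_Y \times g)^* \circ T_{W \times V}$. Chaining the two,
\begin{align*}
T_V(\Th^X(a)) &= T_V\bigl((\id_X \times g)^*(a \boxtimes \tau_V)\bigr)\\
&= (\id_Y \times g)^*\bigl(T_{W \times V}(a \boxtimes \tau_V)\bigr)\\
&= (\id_Y \times g)^*\bigl(T_W(a) \boxtimes \tau_V\bigr) = \Th^Y(T_W(a)).
\end{align*}

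The substantive ingredient is the initial factorization of $\Th^Z$ through the proper graph map $g$; once it is in place, functoriality is a purely formal consequence of the two defining properties of a continuous transformation---naturality in the auxiliary variable and compatibility with external products.
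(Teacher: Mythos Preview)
Your argument is essentially the paper's own proof for the case where the base $W$ is compact, written out in more detail: both rely on Lemma~\ref{rem-equiv-cont-lemma} applied with $V$ in the role of the extra factor, together with naturality in the auxiliary variable. Your explicit factorization through the graph map $g\colon V\to W\times V$ is a nice way to make the ``pullback then multiply by the Thom complex'' description of $\Th$ interact cleanly with those two properties.

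The gap is that you silently assume a Thom class $\tau_V\in\K(V)$ exists. As the paper emphasizes in Example~\ref{ex-thom}, the Thom complex \eqref{eq-bott-cplx} is supported on the zero section, which is a copy of $W$; it therefore defines a class in $\K(V)$ only when $W$ is compact. For noncompact $W$ the Thom homomorphism is still defined---one tensors a compactly supported complex on $W$ with the (non--compactly-supported) Thom complex and observes that the supports intersect compactly---but there is no element $\tau_V\in\K(V)$ to feed into Lemma~\ref{rem-equiv-cont-lemma}, so your identity $T_{W\times V}(a\boxtimes\tau_V)=T_W(a)\boxtimes\tau_V$ does not even parse. The paper's proof makes exactly this case distinction: the compact case follows from Lemma~\ref{rem-equiv-cont-lemma}, and ``the noncompact case can be reduced to the compact case.'' You have handled the first half but not acknowledged or carried out the reduction for the second.
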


\begin{proof}
We mean that if $V$ is a complex vector bundle over a locally compact space $W$, and if $T\in \KK(X,Y)$, then the diagram
\[
\xymatrix{
 \K(X\times W) \ar[r]^{T_W} \ar[d]_{\Th_X}&\K(Y\times W)\ar[d]^{\Th_Y} \\
 \K(X\times V)\ar[r]_{T_V} & \K(Y\times V)
 }
\]
is commutative.  If $W$ is compact, so that the Thom class is defined in $\K(V)$,  then this is a  consequence of the definition of continuous transformation (in the form given in Lemma~\ref{rem-equiv-cont-lemma}). The noncompact case can   be reduced to the compact case. 
\qed
\end{proof}
 
\subsection{Wrong-Way Maps}

Let $X$ and $Y$ be complex or almost-complex manifolds (without boundary) and let $f\colon X \to Y$ be any continuous map.  There is an associated \emph{wrong-way class}
 \begin{equation}
 \label{eq-wrong-way-class}
[X\stackrel{f}\to Y]\in \KK(X,Y) ,
\end{equation}
which determines a \emph{wrong-way map} from 
$ \K(X)$ to $ \K(Y)$.  The construction, which is due to Atiyah and Hirzebruch, is based on the Thom homomorphism and Bott periodicity.

Recall that the normal bundle of an embedding of manifolds $h\colon X \to Z$ is the quotient real vector bundle 
\begin{equation}
\label{eqn-normal-bdle}
N_ZX =    h^* TZ \,\, / \,  TX,
\end{equation}
where the derivative of $h$ is used to embed $TX$ into $h^* TZ$.
If $X$ is almost-complex, then we may embed $X$ equivariantly into a finite-dimensional complex   $G$-vector space $V$  in such a way that the normal bundle  $N_VX$ admits a complex vector bundle structure for which there is an isomorphism 
\begin{equation}
\label{eqn-normal-bdle2}
N_VX\oplus TX  \cong  V{\times}X
\end{equation}
of  complex vector bundles. The normal bundle for the diagonal map of $X$ into $ Y\times V$ is as follows:
\begin{equation}
\label{eq-normal-bdle3}
N_{Y\times V}X \cong  f^*TY\oplus N_VX 
\end{equation}
 (from the definition (\ref{eqn-normal-bdle}) there is a natural projection map from the left-hand side onto $N_VX$, and a natural isomorphism from $f^*TY$ onto the kernel of this projection).  In particular it too is endowed with a complex structure.  The wrong way map from $K(X)$ to $K(Y)$ is by definition  the composition
 \begin{equation}
 \label{eq-wrong-way2}
 \K(X)\longrightarrow \K(N_{Y\times V} X)\longrightarrow \K(Y\times V) \longrightarrow \K(Y) ,
 \end{equation}
 in which the first map is the Thom homomorphism, the second is induced from an embedding of the normal bundle as a tubular neighborhood, and the last is the Bott periodicity isomorphism.  
 Each map in (\ref{eq-wrong-way2}) is given by some $\KK$-class, and the Kasparov product of these   is the wrong-way class \eqref{eq-wrong-way-class}.
 
 The wrong-way class   depends only on the homotopy class of $f$ and is functorial with respect to ordinary composition of smooth maps:
 \[
 [X\stackrel f \to Y]\otimes [Y\stackrel g \to Z] = [X\stackrel {g\circ f}\longrightarrow Z]
 \in \KK(X,Z).
 \]

 \begin{example}
 \label{ex-integrality}
Wrong-way maps in  $\K$-theory (and for that matter $\K$-theory itself) were introduced by Atiyah and Hirzebruch in \cite{MR0110106}  to prove integrality theorems for characteristic numbers.\footnote{Actually the construction of wrong-way maps was only conjectural at the time that \cite{MR0110106} was written, but see  Karoubi's book \cite[Chapter IV, Section 5]{MR0488029} for a full exposition.}    It may be calculated that if $X$ is a closed almost-complex manifold, and if $E$ is a complex vector bundle on $X$, then the wrong-way map associated to $X\to \pt$ sends the $\K$-theory class of $E$ to 
 \begin{equation}
 \label{eqn-rr}
 \pi_*(E ) = \int_X \operatorname{Todd}(TM) \operatorname{ch}(E) \cdot 1_\pt\in K(\pt).
  \end{equation}
As a result the integral is always an integer.  
 \end{example}
 
 \begin{example}
 If $ U\to X$ is the embedding of an open subset into $Y$, then the wrong-way class $[U\to X]$ is the same as the   class $[U\leftarrow X]$ associated to the pushforward construction of Example~\ref{eq-push-forward}.
 \end{example}
  
\begin{example}
 The wrong-way map for the inclusion of the zero section in a complex vector bundle is the Thom homomorphism.  The fact that it  is an isomorphism reflects the fact that   the   section map is a homotopy equivalence.
\end{example}

The wrong-way classes are related to the pullback classes of \eqref{eq-pullback-class} in a number of ways, of which the most important is this:

 \begin{lemma}
 \label{lemma-base-change}
If in  the diagram 
 \[
\xymatrix@C=12pt@R=12pt{  Z\times_Y W \ar[d] \ar[r] & W\ar[d] \\
Z\ar[r] & Y
}
 \]
of almost-complex manifolds the map from $Z$ to $Y$ is a submersion, 
then \begin{equation}
\label{eq-base-change}
 [Z \rightarrow Y]\otimes [Y\leftarrow  W] = [Z \leftarrow Z\times_Y W] \otimes [Z\times_Y W \rightarrow W]
 \end{equation}
in $\KK(Z,W)$. 
 \end{lemma}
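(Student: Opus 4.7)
The plan is to expand both wrong-way classes using the factorization (\ref{eq-wrong-way2}), and then migrate the pullback class $[Y\leftarrow W]$ leftward across each factor until the factorization of the right-hand side emerges.

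Fix a complex embedding $i\colon Z\hookrightarrow V$ satisfying $N_V Z\oplus TZ\cong V\times Z$, and take the closed embedding $Z\hookrightarrow Y\times V$, $z\mapsto (f(z), i(z))$, where $f\colon Z\to Y$ is the given submersion. Writing $N_1:=N_{Y\times V}Z$, the wrong-way class factors as
\[
[Z\to Y] \;=\; \Th \otimes [N_1 \hookrightarrow Y\times V] \otimes [Y\times V\to Y],
\]
the factors being the Thom class of $N_1\to Z$, the pushforward along a tubular-neighborhood open inclusion, and the inverse Bott class of the trivial projection. Because $f$ is a submersion, one may choose the tubular neighborhood so that it is compatible with base change along $g\colon W\to Y$; then $Z\times_Y W$ embeds in $W\times V$ by $(z,w)\mapsto (w,i(z))$ with normal bundle $N_2\cong\tilde g^{*}N_1$, where $\tilde g\colon Z\times_Y W\to Z$, and $[Z\times_Y W\to W]$ admits an analogous factorization through $N_2$.

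Now I multiply the factorization on the right by $[Y\leftarrow W]$ and propagate this pullback leftward across each of the three factors. At the rightmost position,
\[
[Y\times V\to Y]\otimes [Y\leftarrow W] \;=\; [Y\times V\leftarrow W\times V] \otimes [W\times V\to W],
\]
because $W\times V\to Y\times V$ is the base change of $g$ and the Bott element is compatible with such Cartesian products. At the middle position,
\[
[N_1\hookrightarrow Y\times V]\otimes [Y\times V\leftarrow W\times V] \;=\; [N_1\leftarrow N_2] \otimes [N_2\hookrightarrow W\times V],
\]
since under the compatible choice of tubular neighborhood the resulting square of open inclusions is Cartesian. Finally, by naturality of the Thom class under the pullback of vector bundles $\tilde g^{*}N_1 \to N_1$ (an instance of Lemma~\ref{lemma-thom-funct}),
\[
\Th\otimes [N_1\leftarrow N_2] \;=\; [Z\leftarrow Z\times_Y W]\otimes \Th .
\]
Reassembling the right-hand factors produces exactly the factorization of $[Z\leftarrow Z\times_Y W]\otimes [Z\times_Y W\to W]$, which by associativity of the Kasparov product yields the identity (\ref{eq-base-change}).

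The main obstacle I anticipate is the third step, which requires phrasing the naturality of the Thom class in $\KK$-theoretic form against a pullback class $[N_1\leftarrow N_2]$; Lemma~\ref{lemma-thom-funct} applies to continuous transformations of product type, so to deploy it here one must first realize $[N_1\leftarrow N_2]$ as a continuous transformation associated to a Cartesian product and then check that the Thom homomorphism commutes with it. The other two moves are reflections of the standard naturality of the Bott element and of open inclusions under base change, and once the three moves are in place the proof is an exercise in associativity of the Kasparov product.
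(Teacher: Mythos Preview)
Your argument is correct and is exactly the unpacking that the paper has in mind: its proof is the single sentence that the normal bundle for the embedding $Z\times_Y W\hookrightarrow W\times V$ is the pullback $\tilde g^{*}N_1$ of the normal bundle for $Z\hookrightarrow Y\times V$, and your three ``moves'' are precisely the resulting identifications of the Thom, tubular-neighborhood, and Bott factors. Your worry about the third step is unnecessary: the identity $\Th\otimes[N_1\leftarrow N_2]=[Z\leftarrow Z\times_Y W]\otimes\Th$ does not need Lemma~\ref{lemma-thom-funct} at all, since the Thom complex of $N_2=\tilde g^{*}N_1$ is by construction (Example~\ref{ex-thom}) the pullback of the Thom complex of $N_1$, which is exactly the required naturality statement at the level of $\K$-theory classes.
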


The lemma follows from the definitions, together with the fact that the normal bundle associated, as in (\ref{eq-normal-bdle3}), to the top horizontal map is the pullback of the normal bundle associated to the bottom horizontal map.

\subsection{Poincar\'e Duality}
\label{sec-poincare}

We shall calculate $\KK(X,Y)$ when $X$ and $Y$ are closed (almost) complex manifolds.

\begin{lemma} 
\label{lemma-cont-wrong-way}
Continuous transformations are functorial  for   wrong-way maps in $\K$-theory.
\end{lemma}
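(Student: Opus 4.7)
The plan is to decompose the wrong-way map $f_!$ for a smooth map $f\colon W\to V$ of almost-complex manifolds into the three building blocks appearing in its construction (\ref{eq-wrong-way2}), and to verify that a continuous transformation $T\in \KK(X,Y)$ commutes with each block when viewed as an operation on the $W$/$V$-slot of $\K(X\times -)\to \K(Y\times -)$. Recall that $f_!$ factors as a Thom homomorphism for the normal bundle $N_{V\times E}W$ of an embedding $W\hookrightarrow V\times E$, a pushforward along an open tubular embedding $N_{V\times E}W\hookrightarrow V\times E$, and the inverse of the Thom isomorphism for the trivial bundle $V\times E\to V$ (Bott periodicity).

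The first and third pieces are handled by Lemma~\ref{lemma-thom-funct}. The first is an immediate application. The third follows by inverting the vertical arrows in the commutative square provided by that lemma, using that the Thom homomorphism of a trivial complex vector bundle is an isomorphism, so that naturality with respect to it transfers to naturality with respect to its inverse.

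The middle step — commutativity of $T$ with the pushforward induced by an open inclusion — is the only real obstacle, and I would deduce it from the naturality in the $Z$-variable that is built into the definition of a continuous transformation. An open inclusion $U\hookrightarrow W$ of locally compact spaces corresponds, in the category where $\K$ is contravariantly functorial, to the pointed collapse map $W^+\to U^+$ that sends $W\setminus U$ to infinity, and the induced pushforward $\K(U)\to \K(W)$ is exactly the pullback under this collapse map (cf.\ the discussion around (\ref{eq-push-forward})). Naturality of $\{T_Z\}$ applied to the relevant collapse map for the tubular neighborhood then yields the desired commutation. The only subtlety is to confirm that the naturality in Definition~\ref{def-cont-transf} is to be taken with respect to morphisms in this compactified category rather than merely with respect to proper continuous maps; granting this, the three commutative squares paste together to give functoriality of $T$ for $f_!$, as claimed.
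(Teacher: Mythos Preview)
Your proposal is correct and follows essentially the same route as the paper: the paper's proof is a single sentence invoking Lemma~\ref{lemma-thom-funct}, leaving implicit precisely the decomposition into Thom, open-inclusion pushforward, and inverse Bott that you spell out. Your only stated worry---whether naturality in Definition~\ref{def-cont-transf} is with respect to the compactified category---is resolved by the paper's conventions just before that definition, where the $\K$-functor in the $Z$-variable is taken on locally compact spaces with morphisms between one-point compactifications; so the collapse map for a tubular neighborhood is a genuine morphism and your middle square commutes by plain naturality.
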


\begin{proof}
The assertion in the lemma is that if  $T\in \KK(X,Y)$, and if $f\colon W\to Z$ is a  continuous map between almost-complex manifolds, then the diagram
\[
\xymatrix{
\K(X\times W) \ar[r]^{T_W}\ar[d]_{f_*}&  \K(Y\times W) \ar[d]^{f_*}\\
\K(X\times Z) \ar[r]_{T_Z} & \K(Y\times Z)
}
\]
in which the vertical arrows are wrong-way maps, is commutative.
 This follows from   functoriality with respect to  the Thom homomorphism, as described in Lemma~\ref{lemma-thom-funct}. \qed
\end{proof}

Denote by $\Delta_X \in \K(X\times X)$ the image of the unit class $1_X\in \K(X)$ under the wrong-way map induced from the diagonal map  $\delta\colon X \to X\times X$: 
 \begin{equation}
 \label{eq-diag-class}
 \Delta_X : = 1_X  \otimes [X\stackrel{\delta}{\to} X\times X] \in \K (X\times X).
 \end{equation}

\begin{definition}
Let $Y$ be any locally compact  space.  The \emph{Poincar\'e duality homomorphism}
 \begin{equation}
 \label{eq-pd-map}
 \widehat{\phantom{-}}\colon \KK(X,Y)\longrightarrow \K(Y\times X)
\end{equation}
is defined by means of the formula 
\[
 \widehat T = T_X(\Delta_X) \in \K(Y\times X) ,
\]
 where  $T\in \KK(X,Y)$.
 \end{definition}
 
 \begin{example} The Poincar\'e  dual of a wrong-way class $[X\stackrel f \to Y]\in KK(X,Y)$ is the class
 \[
1_X\otimes [X\to Y\times X] \in K(Y\times X)
\]
obtained from the embedding of $X$ into $Y\times X$ as the graph of $f$.
 \end{example}
 
 \begin{proposition}
 \label{prop-poincare}
 The Poincar\'e duality homomorphism  \textup{(\ref{eq-pd-map})} is an isomorphism of abelian groups.
 \end{proposition}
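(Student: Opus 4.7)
The plan is to produce an explicit inverse
\[
\Phi \colon \K(Y\times X) \longrightarrow \KK(X,Y),
\]
defined for $\alpha\in \K(Y\times X)$ and $\xi\in\K(X\times Z)$ by
\[
\Phi(\alpha)_Z(\xi) \;=\; p_{!}\bigl((\alpha\boxtimes 1_Z)\cdot(1_Y\boxtimes \xi)\bigr)\;\in\;\K(Y\times Z),
\]
where $p\colon Y\times X\times Z\to Y\times Z$ is the projection that forgets $X$; its wrong-way map is defined because $X$ is closed and almost-complex. Equivalently, $\Phi(\alpha)$ is the Kasparov product $[X\leftarrow Y\times X]\otimes [\alpha]\otimes [Y\times X \to Y]$. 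Naturality in $Z$ and the $\K(Z)$-module property of each $\Phi(\alpha)_Z$ follow from functoriality of pullback and the projection formula for wrong-way maps, so $\Phi(\alpha)$ is continuous in the sense of Definition~\ref{def-cont-transf}.

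The identity $\widehat{\phantom{-}}\circ \Phi = \id$ is the easier half. Writing $\Delta_X = \delta_{!}(1_X)$ for the diagonal $\delta\colon X\to X\times X$, the projection formula gives
\[
(\alpha\boxtimes 1_X)\cdot(1_Y\boxtimes \Delta_X) \;=\; (\mathrm{id}_Y\times \delta)_{!}(\alpha),
\]
and, since $p\circ(\mathrm{id}_Y\times \delta) = \mathrm{id}_{Y\times X}$, functoriality of wrong-way maps yields $\Phi(\alpha)_X(\Delta_X) = \alpha$. This is essentially Lemma~\ref{lemma-base-change} applied to the fibre square built from $\delta$.

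The main work lies in verifying $\Phi\circ \widehat{\phantom{-}} = \id$. The key algebraic input is the reproducing property of the diagonal: for any $Z$ and any $\xi\in \K(X\times Z)$,
\[
\xi \;=\; q_{!}\bigl((\Delta_X\boxtimes 1_Z)\cdot(1_X\boxtimes \xi)\bigr),
\]
where $q\colon X\times X\times Z\to X\times Z$ forgets the middle $X$-factor; this is proved in the same way as the previous paragraph. Given this, for $T\in \KK(X,Y)$ I unfold $\Phi(\widehat T)_Z(\xi)$ in three moves. First, Lemma~\ref{rem-equiv-cont-lemma} yields $\widehat T\boxtimes 1_Z = T_{X\times Z}(\Delta_X\boxtimes 1_Z)$. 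Second, the $\K(X\times Z)$-module property of $T_{X\times Z}$ gives
\[
T_{X\times Z}(\Delta_X\boxtimes 1_Z)\cdot(1_Y\boxtimes \xi) \;=\; T_{X\times Z}\bigl((\Delta_X\boxtimes 1_Z)\cdot(1_X\boxtimes \xi)\bigr).
\]
Third, Lemma~\ref{lemma-cont-wrong-way} moves the outer $p_{!}$ past $T$, producing the inner $q_{!}$:
\[
\Phi(\widehat T)_Z(\xi) \;=\; T_Z\bigl(q_{!}\bigl((\Delta_X\boxtimes 1_Z)\cdot(1_X\boxtimes \xi)\bigr)\bigr) \;=\; T_Z(\xi),
\]
by the reproducing identity.

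The main obstacle is not any individual step but careful bookkeeping: in the triple product $X\times X\times Z$ one must consistently distinguish which $X$-factor plays the role of source of $T$ and which plays the role of a piece of the parameter when invoking Lemmas~\ref{rem-equiv-cont-lemma} and \ref{lemma-cont-wrong-way}. This is exactly why continuous transformations were required to be natural and $\K$-linear over \emph{all} auxiliary spaces $Z$, and not merely over points or compact spaces. With this structure in place, each step of the calculation reduces to an application of the projection formula and the base-change identity.
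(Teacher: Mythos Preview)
Your proof is correct and follows essentially the same approach as the paper: define the inverse $\Phi(\alpha)=[X\leftarrow Y\times X]\otimes[\alpha]\otimes[Y\times X\to Y]$ and verify both compositions are the identity. The only presentational difference is that the paper packages the key step as the ``swap'' identity $(a\boxtimes 1_X)\cdot\Delta_X=(1_X\boxtimes a)\cdot\Delta_X$ (justified by the support of $\Delta_X$ on the diagonal and a homotopy of the two projections there) together with $q_*(\Delta_X)=1_X$, whereas you fold both into a single ``reproducing property'' $\xi=q_!\bigl((\Delta_X\boxtimes 1_Z)\cdot(1_X\boxtimes\xi)\bigr)$ derived from the projection formula; these are equivalent formulations of the same content.
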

 
 \begin{proof}
 A duality map  in the reverse direction,
 \begin{equation}
 \label{eq-reverse-duality}
 K(Y\times X)\longrightarrow \KK(X,Y) ,
 \end{equation}
 may be defined by sending a vector bundle (or $\K$-theory class) $E$ on $Y\times X$ to the Kasparov product
 \[
  [X\leftarrow Y\times X] \otimes [E]\otimes [X\times Y\to Y] \in \KK(X,Y) .
  \]
  Applying first (\ref{eq-pd-map}) and then (\ref{eq-reverse-duality}) we obtain from a continuous transformation $T\in \KK(X,Y)$ the continuous transformation
 \begin{equation}
  \label{eq-pd-comp1}
  K(X )\ni a\mapsto  p_*\bigl( (1_Y\boxtimes a)\cdot  T_{X}(\Delta_X)  \bigr) \in K(Y )  ,
  \end{equation}
   where $p_*\colon \K(Y\times X)\to \K(Y)$ is the wrong way map associated to the projection
\[
p\colon Y\times X \longrightarrow Y 
\]
 onto the first factor  (to keep the notation simple we are suppressing the auxiliary space $Z$). 
  By   definition of continuous transformation, the right-hand side of (\ref{eq-pd-comp1}) is equal to 
  \begin{equation}
  \label{eq-pd-comp2}
   p_*\bigl(   T_{X} ((1_X\boxtimes a)\cdot \Delta_X) \bigr)  .
  \end{equation}
  Now 
  \begin{equation}
  \label{eq-pd-comp3}
 (a\boxtimes 1_X)\cdot \Delta_X =   (1_X\boxtimes a)\cdot \Delta_X 
  \end{equation}
  because the class  $\Delta_X\in K(X\times X)$ is supported on the diagonal, and because the two coordinate projections from $X\times X$ to $X$ are homotopic in a neighborhood of the diagonal.  Using (\ref{eq-pd-comp3}) together with Lemma~\ref{lemma-cont-wrong-way}, we find that   (\ref{eq-pd-comp2}) is equal to 
$
  T(q_*(\Delta_X)\cdot a)
$,
 where $q_*$ is the wrong-way map associated to the projection 
 \[
 q\colon X\times X\to X
 \]
onto the first factor.  Since $q_*(\Delta_X)=1_X$, we recover from (\ref{eq-pd-comp1}) the continuous transformation $T$ with which we began, as required.
  
 The calculation of the composition of the two duality maps in the other order is similar. \qed

 \end{proof} 
 
 \begin{remark}
 A similar discussion is possible for manifolds with boundary.
 Suppose that $\overline X$ is a compact almost-complex manifold with boundary, and denote by $X$ its interior.  Define a modified diagonal map 
 \[
 \begin{gathered}
 \overline X\longrightarrow \overline X \times X \\
 x \mapsto (x, f(x))
 \end{gathered}
 \]
 using any $f\colon \overline{X} \to X$ for which the compositions
 \[
 \overline{X}\longrightarrow X \longrightarrow \overline{X}\quad \text{and}
 \quad
  X \longrightarrow \overline{X} \longrightarrow X
  \]
 with the inclusion of $X$ into $\overline{X}$ are homotopic to the identity (for example, define $f$ by deforming a closed collar of the boundary into its interior).  Define $\Delta_{X}\in \K(\overline{X}\times X)$ to the image of the class $1_{\overline{X}}\in \K(\overline{X})$ under the associated wrong way map, and then define the \emph{Poincar\'e-Lefschetz duality map}
 \begin{equation}
 \label{eqn-pd-map2}
 \widehat{\phantom{-}}\colon \KK(\overline X,Y)\longrightarrow \K(Y\times X)
\end{equation}
by the same formula as before.   It too is an isomorphism, by the same argument as before.
\end{remark}

\subsection{Equivariance with Respect to a Compact Group}
\label{sec-equivariance}

If $G$ is a compact Hausdorff group, and if $X$ is a compact or locally compact Hausdorff $G$-space, then we can form the equivariant topological $\K$-theory group $\K_G(X)$; see  \cite{MR0234452}.  We can repeat our discussion up to now using $\K_G(X)$ in place of $\K(X)$ (and, for example, equivariant continuous maps between complex $G$-manifolds, and so on), including the definition of equivariant  groups $\KK_G(X,Y)$ comprised of continuous transformations from the equivariant $\K$-theory of $X$ to the equivariant $\K$-theory of $Y$.  Our discussion carries through without change.

New issues arise related to restriction and induction.  If  $H$ is a closed subgroup of $G$, then there is a  \emph{restriction map}
\begin{equation}
\label{eq-restr-map}
\K_G(X)\longrightarrow \K_H(X)
\end{equation}
that forgets $G$-equivariance and retains only $H$-equivariance.  
In the other direction there is an \emph{induction map}
\begin{equation}
\label{eq-ind-map}
\K_H(X)\longrightarrow \K_G(\Ind_H^G X ).\end{equation}
Here $
\Ind_H^G X$ is  the quotient of $G\times X$ by the $H$-action $h\colon (g,x)\mapsto (gh^{-1},hx)$, and the induction map is obtained by associating to an $H$-equivariant bundle $E$ over $X$ the $G$-equivariant bundle $\Ind_H^G E $ over $\Ind _H^G X$. 
The induction map (\ref{eq-ind-map})  is an \emph{isomorphism}: its inverse is given by restriction, followed by pulling back along the $H$-equivariant map from $X$ into $\Ind_H^G X$ that sends $x\in X$ to $(e,x)\in\Ind_H^G X$.

Using the induction isomorphism  (\ref{eq-ind-map}) we   define a restriction homomorphism 
\begin{equation}
\label{eq-kk-resr-map}
\Res_H^G\colon \KK_G(X,Y)\longrightarrow \KK_H(X,Y)
\end{equation}
as follows.  If $X$ is a $G$-space and $Z$ is an $H$-space, then there is a $G$-equivariant homeomorphism
\begin{equation}
\label{eq-ind-homeo}
\Ind_H^G  (X\times Z) \stackrel  \cong\longrightarrow    X\times \Ind_H^G Z
\end{equation}
induced from $(g,x,z) \mapsto (gx, g,z)$.  If $T\in \KK_G (X,Y)$, then define a continuous transformation in $H$-equivariant $\K$-theory by means of the diagram
\begin{equation}
\label{eq-ind-homo2}
\xymatrix{
\K_H (X\times Z) \ar[r]^{(\Res_H^G T)_Z}\ar[d]_\cong&  \K_H(Y\times Z)\ar[d]^\cong \\
\K_G(\Ind_H^G (X\times Z))  & \K_G (\Ind_H^G(Y\times Z)) \\
\K_G (  X\times \Ind_H^G Z) \ar[u]^\cong\ar[r]_{T_{\Ind_H^G Z}} & \K_G (  Y\times \Ind_H^G Z)  \ar[u]_\cong.
}
\end{equation}
The pullback, vector bundle and wrong-way $\KK_G$-classes defined in (\ref{eq-pullback-class}), (\ref{eq-vb-class}) and (\ref{eq-wrong-way-class})  are mapped to like $\KK_H$-classes obtained by forgetting $G$-equivariant structure and retaining only $H$-equivariant structure.  Moreover restriction from $\KK_G$ to $\KK_H$ is compatible with Kasparov product.

\begin{remark}
\label{rem-restr-in-kk}
Note for later use that the definition of the restriction homomorphism in $\KK$ does not involve the restriction homomorphism in $\K$-theory.
\end{remark}

There is also an induction homomorphism 
\begin{equation}
\label{eq-ind-map-kk}
\Ind_H^G\colon \KK_H(X,Y)\longrightarrow \KK_G(\Ind^G_H X, \Ind^G_H Y)
\end{equation}
that is defined in a very similar way,  by means of the commuting diagram
\[
\xymatrix{
\K_H (X\times Z) \ar[r]^{T_Z}\ar[d]_\cong&  \K_H(Y\times Z)\ar[d]^\cong \\
\K_G(\Ind_H^G (X\times Z))  & \K_G (\Ind_H^G(Y\times Z)) \ar[d]^\cong \\
\K_G (  \Ind_H^G X\times  Z) \ar[u]^\cong\ar[r]_{(\Ind_H^G T)_{Z}} & \K_G (   \Ind_H^GY\times  Z)  \ar[u]_\cong.
}
\]
Here $Z$ is a $G$-space, viewed in the top row as an $H$-space by restriction.  The top vertical maps are the induction isomorphisms (\ref{eq-ind-map}) and the bottom result from the identifications
\[
\Ind_H^G  (X\times Z) \stackrel  \cong\longrightarrow     \Ind_H^G X\times  Z
\]
given by $(g,x,z) \mapsto (g, x,gz)$.  The induction map is compatible with the construction of pullback and vector bundle classes in (\ref{eq-pullback-class}) and (\ref{eq-vb-class}).  If $G/H$ is given a $G$-invariant almost-complex structure (should one exist), then $\Ind_H^G X$ will acquire a $G$-invariant almost-complex structure from an $H$-invariant almost complex structure on $X$, and the induction map becomes compatible with the wrong-way classes (\ref{eq-wrong-way-class}) too.

Neither (\ref{eq-kk-resr-map}) nor (\ref{eq-ind-map-kk}) is an isomorphism.  But if $X$ is a $G$-space and   $Y$   is an $H$-space, then the hybrid map 
\begin{equation}
\label{eq-kk-indres-map}
\IndRes \colon  \KK_G(X,\Ind_H^G Y)  \longrightarrow  \KK_H(X,Y)  
\end{equation}
 that is obtained by first restricting from $G$ to $H$, as in (\ref{eq-kk-resr-map}),  and then restricting to the subspace $Y\subseteq \Ind_H^G  Y$, \emph{is} an isomorphism. Its inverse is   defined by means of the diagram
\[
\xymatrix@C=65pt{
\K_G(X\times Z)\ar[r]^{(\IndRes^{-1}T)_Z}\ar[dd]_{\text{restriction}}&  \K_G(\Ind_H^G Y \times Z)\ar[d]^\cong\\
 & \K_G(\Ind_H^G(Y\times Z)) \\
 \K_H(X\times Z)   \ar[r]_{T_Z} & \K_H(Y\times Z)\ar[u]^\cong_{\text{induction}} .
 }
\]
In particular, we find that if the action of $H$ on  $Y$ extends to $G$, then restriction to $H$ followed by restriction to $eH\subseteq G/H$ gives an isomorphism
\begin{equation}
\label{eq-simple-kk-ind-iso}
\KK_G(X,G/H\times Y) \stackrel \cong \longrightarrow \KK_H(X,Y) .
\end{equation}

\subsection{The Index Theorem}
\label{sec-index-thm}
Let $X$  be a  complex (or almost-complex) $G$-manifold.  The index map for the  Dolbeault operator on $X$  is a homomorphism from $\K_G(X)$   to $\K_G(\pt)$, and it extends by a families index construction to a continuous transformation
\[
\Index_Z \colon \K_G(X\times Z)\longrightarrow \K_G(Z) .
\]
The Atiyah-Singer index theorem asserts that 
\[
\Index = [X\to \pt] \in \KK_G(X,\pt).
\]
For example, the characteristic class formula of Example~\ref{ex-integrality} describes the index of the Dolbeault operator on a closed  complex manifold $X$ (without group action) with coefficients in a smooth complex vector bundle $E$.

 More generally the Atiyah-Singer index theorem for families asserts that if $p\colon Z\to Y$ is a submersion of closed, complex manifolds, then the class
 \[
 [Z\stackrel p \to Y] \in \KK_G(Z,Y)
 \]
 is equal to the index class associated to the family of Dolbeault operators on the fibers of $p$.
 
\subsection{Kasparov's Analytic KK-Theory}
\label{sec-kasp-theory}

Kasparov's $\K\K$-groups \cite{MR582160} have their origins in the Atiyah-J\"anich theorem describing $\K$-theory in terms of families of Fredholm operators \cite{MR0248803,MR0190946}, and in Atiyah's attempt to define $\K$-homology in terms of ``abstract elliptic operators'' \cite{MR0266247}; see \cite{MR1077390} for an introduction.  By design, there is a natural transformation
\begin{equation}
\label{eq-kk-to-kk}
\K\K_G(X,Y)\longrightarrow \KK_G(X,Y)
\end{equation}
that is compatible with Kasparov products.  The classes in (\ref{eq-pullback-class}), (\ref{eq-vb-class}) and (\ref{eq-wrong-way-class}) all have   counterparts in Kasparov theory, from which it follows that $\K\K$-theory satisfies Poincar\'e duality in the way that we described in Section~\ref{sec-poincare} (in fact see \cite[Sec.\ 4]{MR918241} for much more general formulations of Poincar\'e  duality).  As a result, the map (\ref{eq-kk-to-kk}) is an isomorphism whenever $X$ is a compact almost-complex $G$-manifold (possibly with boundary). 
 A bit more generally, if $X$ is a $G$-ENR (see for example \cite{MR551743}) then $X$ is a $G$-retract of a compact almost-complex $G$-manifold with boundary, namely a suitable compact neighborhood in Euclidean space, and we see again that (\ref{eq-kk-to-kk}) is an isomorphism.

Kasparov's $\K\K$-theory gives a means   to conveniently  formulate and prove the Atiyah-Singer index theorems reviewed in the previous section.  But the real strength of $\K\K$-theory only becomes apparent when dealing with equivariant $\K$-theory for \emph{noncompact} groups $G$ \cite{MR918241}, where it provides a set of tools that operate beyond the reach of the rather simple ideas we have been developing up to now.   We shall take up this point (in a somewhat idiosynchratic way) in the last section of this paper.

\section{Weyl Character Formula}
\label{sec-weyl}

Let $G$ be a compact connected Lie group.   Our aim is to present a $\K$-theoretic account of the Weyl character formula for $G$, more or less along the lines of \cite{MR0232406}, but in a slightly more general context.

 \subsection{A First Look at KK-Theory for the Flag Variety}
\label{sec-kk-for-flags}

The Weyl character formula calculates the restrictions to a maximal torus $T\subseteq G$ of the characters of irreducible representations of $G$.  So in $\K$-theory terms it concerns the map
\begin{equation}
\label{eq-restr-map2}
\K_G(\pt)\longrightarrow \K_G(G/T)
\end{equation}
given by pulling back vector bundles over a point (that is, representations) to vector bundles over the homogeneous space $G/T$.  

One reason for recasting the character formula in these terms is that the manifold $G/T$ carries a $G$-invariant complex structure, using which the machinery of the previous sections can be applied.  For the time being let us frame this as follows: there is a complex $G$-manifold $\Flags$ and a $G$-equivariant diffeomorphism between $\Flags$ and  $G/T$.  We shall recall the construction of  $\Flags$ (the flag variety) in the next section.

Consider then the $\K$-theory classes
\[
[\pt \leftarrow \Flags] \in \KK_G (\pt, \Flags)
\]
and 
\[
[ \Flags\to \pt]\in \KK_G (\Flags, \pt) .
\]
Kasparov product with the first gives the restriction map (\ref{eq-restr-map2}).  It turns out that Kasparov product with the second gives a one sided inverse, and indeed
\begin{equation}
\label{eq-atiyah-embedding}
[\pt \leftarrow \Flags] \otimes [\Flags \to \pt] = \id \in \KK_G(\pt,\pt).
\end{equation}
This was first observed by Atiyah \cite[Prop.\ 4.9]{MR0228000} and we shall check it later (see Remark~\ref{remark-atiyah-embedding}).   So  (\ref{eq-restr-map2}) embeds $\K_G(\pt)$ as a summand of $\K_G(B)$.  Equivalently, if $T$ is a maximal torus in $G$, then the restriction map 
\[
\K_G(\pt)\to \K_T(\pt)
\]
between representation rings is an embedding onto a direct summand.

Define
 \begin{equation}
 \label{eq-def-of-pi}
\Pi =  [\Flags \to \pt]\otimes [\pt  \to \Flags] \in \KK_G(\Flags,\Flags) .
 \end{equation}
It follows from (\ref{eq-atiyah-embedding}) that $\Pi$ is an idempotent in the Kasparov ring $\KK_G(\Flags,\Flags)$, and the problem of determining the representation ring $\K_G(\pt)$ becomes (to a certain extent, at least) the problem of usefully describing the idempotent $\Pi\in \KK_G(\Flags,\Flags)$.  We shall give one solution in the next section.

\subsection{Intertwining Operators}
\label{sec-intertwiners}
Because it is a homogeneous space,  every $G$-equivariant map from $\Flags$ to itself is a diffeomorphism.  The $G$-equivariant self-maps of $\Flags$ therefore assemble into a group $W$, but we'll do so using the opposite of the usual composition law for maps, so as to obtain a right-action of $W$ on $\Flags$:
\begin{equation}
\label{eq-weyl-action}
F\times W\ni (b,w)\mapsto b\cdot w\in \Flags.
\end{equation}
This is the \emph{Weyl group} of $G$. In other words, the Weyl group is the opposite of the usual group of $G$-equivariant self-maps.  It is a finite group.    

If a basepoint $\pt\in \Flags$ is chosen, and if $T$ is the maximal torus that fixes the basepoint, then 
under the resulting identification $\Flags\cong G/T$  the action (\ref{eq-weyl-action}) takes the form
\[
G/T\times N(T)/T\ni (gT, wT) \mapsto g  wT\in G/T ,
\]
where $N(T)$ is the normalizer of $T$ in $G$.  This explains our preference for right actions in the definition of the Weyl group.  Right actions also match nicely with the Kasparov product: each Weyl group element determines a wrong-way class
\begin{equation}
\label{eq-intertwiner-def}
I_w=[\Flags \stackrel w \to \Flags]\in \KK_G(\Flags, \Flags),
\end{equation}
and we obtain a homomorphism from $W$ into the group of invertible elements in the ring $\KK_G(\Flags, \Flags)$. We'll call the elements $I_w\in \KK_G(\Flags, \Flags)$   \emph{intertwining operators} and we shall analyze them further in Section~\ref{sec-bott-sam}.

\subsection{Weyl Character Formula in KK-Theory}
\label{subsec-weyl-kk}

The ring $\KK_G(\Flags,\Flags)$ contains the intertwiners $I_w$ from (\ref{eq-intertwiner-def}) as well as the classes (\ref{eq-vb-class}) associated to $G$-equivariant vector bundles on $\Flags$.  Our aim   is to calculate the element $\Pi$ from (\ref{eq-def-of-pi}) in terms of these ingredients.

We shall use the vector bundle class
\[
[\wdg^* T^*\Flags]\in \KK_G(\Flags, \Flags).
\]
Here $T^*\Flags$ is the complex  dual of the tangent bundle $T\Flags$, and  we take degree into account, so that 
\[
[\wdg^* T^*\Flags]: = \sum (-1)^p [\wdg^p T^*\Flags]\in \KK_G(\Flags, \Flags).
\]
The representation-theoretic significance of this class  will be reviewed in the next section.  Its geometric significance is as follows.
Let $X$ be any closed complex or almost-complex manifold equipped with a smooth action of a compact torus $T$.  The $T$-fixed point set $F\subseteq X$ is a submanifold and it   inherits an almost-complex structure from $X$ (the tangent bundle $TF$ is the $T$-fixed set of  $TX\vert_F$, and this is obviously a complex subbundle of  $TX\vert_F$).

\begin{lemma} 
\label{lemma-localization2}
 If $ F\to X$ is the inclusion of the fixed-point manifold, then 
\begin{equation}
\label{eq-F-localization0}
 \wdg^*T^*X  =   \wdg^* T^*F  \otimes [F\to X] \in \K_T(X),
\end{equation}
and
\begin{equation}
\label{eq-F-localization}
[\wdg^*T^*X] = [X\leftarrow F]\otimes [\wdg^* T^*F]\otimes [F \to X] \in \KK_T(X,X ).
\end{equation}
\end{lemma}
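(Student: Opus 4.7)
The plan is to prove (\ref{eq-F-localization0}) first using a localization/Koszul-complex argument on the normal bundle, and then to deduce (\ref{eq-F-localization}) from it by means of the projection formula for wrong-way maps along the closed embedding $F\hookrightarrow X$.

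For (\ref{eq-F-localization0}), the strategy is to represent $\wdg^*T^*X\in \K_T(X)$ as a compactly supported complex on $X$ whose support lies in $F$, and then identify that complex with the Thom-pushforward description of $\wdg^*T^*F\otimes [F\to X]$. First I would pick a generic $\xi\in \mathfrak{t}$ so that $\exp(\R\xi)$ is dense in $T$; the induced vector field $\xi_X$ on $X$ is $T$-invariant and vanishes exactly on $F$. The Koszul complex $(\wdg^*T^*X,\iota_{\xi_X})$ has alternating bundle sum equal to $\wdg^*T^*X$ in $\K_T(X)$ and is exact off $F$. Next, choose a $T$-equivariant tubular neighborhood $U\cong N_XF$ with projection $\pi\colon U\to F$; under the resulting splitting $TX|_U\cong \pi^*TF\oplus \pi^*N_XF$, the normal linearization $L_\xi\colon N_XF\to N_XF$ of $\xi_X$ generates the induced torus action on normal fibers, which has only the origin as a fixed point. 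Hence $L_\xi$ is conjugate to a block-diagonal skew matrix with purely imaginary nonzero eigenvalues $\pm i\mu_j$, so $(1-t)L_\xi+tI$ is invertible for all $t\in[0,1]$. The straight-line deformation from $\xi_X|_U$ first to the linearization $(0,L_\xi n)$ and then to the radial field $(0,n)$ (cut off suitably near the boundary of $U$ so as to leave $\xi_X$ unchanged outside a smaller sub-neighborhood) passes only through $T$-equivariant vector fields whose zero locus is exactly $F$. This yields a homotopy of compactly supported complexes, so the $\K_T(X)$-class does not change, and the endpoint is the complex $\pi^*(\wdg^*T^*F)\otimes(\pi^*\wdg^*N^*_XF,\iota_r)$. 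By Example~\ref{ex-thom} and the definition of the wrong-way map for a closed embedding, pushing this forward from $U$ to $X$ represents exactly $\wdg^*T^*F\otimes[F\to X]$.

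For (\ref{eq-F-localization}) I would verify that the two $\KK_T(X,X)$ classes induce equal continuous transformations. For any $T$-space $Z$ and $\alpha\in \K_T(X\times Z)$, the class $[\wdg^*T^*X]$ acts by multiplication by $\wdg^*T^*X\boxtimes 1_Z$, while the composition $[X\leftarrow F]\otimes[\wdg^*T^*F]\otimes[F\to X]$ sends $\alpha$ to $f_*\bigl(\alpha|_{F\times Z}\cdot(\wdg^*T^*F\boxtimes 1_Z)\bigr)$, where $f\colon F\times Z\hookrightarrow X\times Z$. The projection formula $f_*(f^*\alpha\cdot \beta)=\alpha\cdot f_*(\beta)$ along the closed embedding $f$ is immediate from the tubular-neighborhood/Thom description of the wrong-way map, since restriction of $\alpha$ to the tubular neighborhood is the pullback along $\pi$ of $\alpha|_{F\times Z}$. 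Using this, the transformation becomes $\alpha\cdot f_*(\wdg^*T^*F\boxtimes 1_Z)$; since wrong-way maps commute with external products with trivial fibers, this equals $\alpha\cdot(f_*(\wdg^*T^*F)\boxtimes 1_Z)$; and by (\ref{eq-F-localization0}) this equals $\alpha\cdot(\wdg^*T^*X\boxtimes 1_Z)$, matching the action of $[\wdg^*T^*X]$.

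The main obstacle is the homotopy-of-differentials argument in the first step: one must verify that the Koszul complex built from $\xi_X$ can be deformed, through compactly supported complexes with zero locus equal to $F$, into the genuine Thom complex on the normal bundle. The crucial input is the invertibility of the infinitesimal linear action $L_\xi$ on normal fibers together with the purely imaginary spectrum of $L_\xi$, which guarantees that the straight-line interpolation with the identity remains nondegenerate and keeps the vanishing locus of the deforming vector field exactly on $F$ throughout the homotopy.
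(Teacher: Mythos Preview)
Your proposal is correct and follows essentially the same route as the paper: represent $\wdg^*T^*X$ by the Koszul complex $(\wdg^*T^*X,\iota_{\xi_X})$ supported on $F$, pass to a $T$-equivariant tubular neighborhood $N$, and homotope the differential to the radial contraction to recognize the Thom pushforward of $\wdg^*T^*F$. The only cosmetic differences are that the paper fixes a $T$-invariant Hermitian metric and observes that $v^N$ and the radial field $r$ are \emph{orthogonal} (so $tv^N+(1-t)r$ vanishes only on $F$), whereas you phrase the same nondegeneracy via the purely imaginary spectrum of $L_\xi$; also, with a $T$-equivariant tubular neighborhood your separate ``linearization'' step is redundant (the Killing field on $N$ is already $(0,L_\xi n)$), and your projection-formula derivation of (\ref{eq-F-localization}) simply spells out what the paper records in one line as ``a consequence of (\ref{eq-F-localization0}).''
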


 \begin{proof}
 Fix a $T$-invariant Hermitian metric on $X$.  Pick a vector $v$ in the Lie algebra of $T$ such that $\exp(v)$ topologically generates the torus $T$, and denote  by $v^X$ the associated Killing vector field on $X$.  Its zero set is precisely $F$.  Let $N$ be the normal bundle of $F$.  The torus $T$ also acts on $N$, and     the element $v\in \operatorname{Lie}(T)$ determines a vertical vector field $v^N$ on $N$.   The zero set of $v^N$ is precisely the set $F\subseteq N$ of zero vectors in $N$.
  
The element $\wdg^* T^*X\in \K_T(X)$ is the class of the complex of vector bundles 
  \begin{equation}
 \label{delta-eqn1}
 \wdg^0 T^*X \xleftarrow{\,\,\iota_{v^X}\,\,} \wdg ^1 T^*X \xleftarrow{\,\,\iota_{v^X}\,\,}    \cdots 
 \end{equation}
 on $X$ (since $X$ is compact the  differentials are actually irrelevant to  the $\K$-theory class).  The  support of the complex is $F$, and we   find that  $\wdg^*T^*X$ is equal to the pushforward (via a tubular neighborhood embedding) of the class in $\K_T(N)$ associated to the complex
   \begin{equation}
 \label{delta-eqn2}
 \wdg^0 T^*X \xleftarrow{\,\,\iota_{v^N}\,\,} \wdg ^1 T^*X \xleftarrow{\,\,\iota_{v^N}\,\,}   \cdots 
 \end{equation}
 Here the bundles $\wdg^p T^*X$, originally defined on $X$, are restricted to $F$ then pulled back to $N$.  On the other hand, the class $ \wdg^* T^*F \otimes [F{\to} X] $ is the pushforward from $N$ to $X$   of the class  associated to the  complex 
   \begin{equation}
 \label{delta-eqn3}
 \wdg^0 T^*X \xleftarrow{\,\,\iota_{r}\,\,} \wdg ^1 T^*X \xleftarrow{\,\,\iota_{r}\,\,}     \cdots 
 \end{equation}
 where $r$ is the radial vector field from Example~\ref{ex-thom} and (\ref{eq-bott-cplx}).
But the vertical tangent vector fields $v^N$ and $r$ are pointwise linearly independent, in fact orthogonal in the underlying euclidean metric, everywhere away from the zero vectors in $N$. So the complexes (\ref{delta-eqn2}) and (\ref{delta-eqn3}) are homotopic (in the topological sense) through complexes with support $F\subseteq N$: the differentials in the intermediate complexes   are contractions against $t v^N + (1-t)r$.  So the two complexes determine the same $\K$-theory class.   Equation \eqref{eq-F-localization} is a consequence of \eqref{eq-F-localization0}. \qed \end{proof}

In the case of the flag variety, where $F$ is zero-dimensional,  Lemma~\ref{lemma-localization2} implies that 
\begin{equation}
\label{eq-F-localization2}
[\wdg^*T^*\Flags] = [\Flags\leftarrow F] \otimes [F \to \Flags] \in \KK_T(\Flags,\Flags ) .
\end{equation}
 This leads to the following formulas:

\begin{theorem}[{\bf Weyl Character Formula in KK-Theory}]
\label{thm-weyl-kk}
\begin{equation}
\label{eq-weyleq1}
[\pt \leftarrow \Flags] \otimes [\wdg^*T^*\Flags]\otimes [\Flags\to \pt] = |W|\cdot [\pt \to \pt] \in \KK_G(\pt,\pt) 
\end{equation}
and
\begin{equation}
\label{eq-weyleq2}
[\Flags \to \pt] \otimes [\pt \leftarrow \Flags] \otimes [\wdg^*T^*\Flags] = \sum_{w\in W} [\Flags\stackrel w \to \Flags ]\in \KK_G(\Flags,\Flags) .
\end{equation}
\end{theorem}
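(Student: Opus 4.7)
The strategy for both identities is $T$-equivariant localization. Equation~\eqref{eq-weyleq1} can be reduced directly to $\KK_T(\pt,\pt)$ via the injection $R(G) \hookrightarrow R(T)$; for Equation~\eqref{eq-weyleq2} I would pass to $\KK_T(\Flags,\pt)$ through the $\IndRes$ isomorphism \eqref{eq-simple-kk-ind-iso}, since $\Flags = \Ind_T^G \pt$.

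For \eqref{eq-weyleq1}, first apply $\Res_T^G$ and use Lemma~\ref{lemma-localization2} to replace $[\wdg^* T^*\Flags]$ with $[\Flags\leftarrow F]\otimes [F\to\Flags]$, where $F = \Flags^T$. Functoriality of pullback classes composes $[\pt\leftarrow\Flags]\otimes [\Flags\leftarrow F]$ into $[\pt\leftarrow F]$, and functoriality of wrong-way classes composes $[F\to\Flags]\otimes [\Flags\to\pt]$ into $[F\to\pt]$. Since $F$ is the disjoint union of $|W|$ points each carrying the trivial $T$-action, the product $[\pt\leftarrow F]\otimes [F\to \pt]$ splits as $|W|$ copies of $\id\in \KK_T(\pt,\pt)$, summing to $|W|\cdot \id$.

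For \eqref{eq-weyleq2}, view both sides inside $\KK_T(\Flags,\pt)$ via $\IndRes$, which sends $S$ to $\Res_T^G(S)\otimes [\Flags\leftarrow\{p_e\}]$, with $p_e = eT$ the basepoint. Apply the same localization argument to the LHS and use base change (Lemma~\ref{lemma-base-change}) to combine the submersion $\Flags\to\pt$ with the inclusions $\{p_w\}\hookrightarrow\Flags$; the LHS collapses to the continuous transformation
\[
E \in \K_T(\Flags) \longmapsto \chi_T(\Flags,E)\cdot \wdg^* T^*_{p_e}\Flags \in R(T).
\]
The RHS, after base change applied to each diffeomorphism $I_w$, becomes $E \mapsto \sum_w I_w(E)|_{p_e}$.

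The main obstacle is identifying these two transformations. Because $I_w$ is a smooth but generally not holomorphic diffeomorphism of $\Flags$ (for instance, in the $SU(2)$ case the nontrivial $I_w$ is anti-holomorphic), its wrong-way $\K\K$-class is not a naive pullback along $I_w^{-1}$; a Lefschetz-type analysis at each $T$-fixed point is required, yielding
\[
I_w(E)|_{p_e} \;=\; \frac{\wdg^* T^*_{p_e}\Flags}{\wdg^* T^*_{p_{w^{-1}}}\Flags}\cdot E|_{p_{w^{-1}}}.
\]
Summing over $w$, reindexing, and invoking the Atiyah-Bott fixed-point formula $\chi_T(\Flags,E) = \sum_w E|_{p_w}/\wdg^* T^*_{p_w}\Flags$ for the Dolbeault operator on $\Flags$ then identifies the two transformations, completing the proof.
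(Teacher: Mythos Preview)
Your argument for \eqref{eq-weyleq1} is exactly the paper's: restrict to $T$, apply Lemma~\ref{lemma-localization2}, and collapse to $[\pt\leftarrow F]\otimes[F\to\pt]=|W|\cdot\id$.

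For \eqref{eq-weyleq2} you take a different and heavier route. The paper does \emph{not} pass through $\IndRes$ to $\KK_T(\Flags,\pt)$; instead it applies the Poincar\'e duality isomorphism $\KK_G(\Flags,\Flags)\cong\K_G(\Flags\times\Flags)$ and then induction to reach $\K_T(\Flags)$. The payoff is that under this composite the left-hand side becomes simply the class $\wdg^*T^*\Flags\in\K_T(\Flags)$, with no index map and no Euler-class denominators. Equation~\eqref{eq-F-localization0} then rewrites this as $\sum_w 1_\pt\otimes[\pt\stackrel w\to\Flags]$, and one checks directly that these summands are the images of the $I_w$ under the same composite (the Poincar\'e dual of a wrong-way class is the graph pushforward). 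No fixed-point formula, no division in $R(T)$, no separate analysis of the intertwiners.

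Your $\IndRes$ path instead produces $E\mapsto\Lambda\cdot\chi_T(\Flags,E)$ on the left and $E\mapsto\sum_w I_w(E)\vert_{p_e}$ on the right, and matching them forces you to import the Atiyah--Bott holomorphic Lefschetz formula together with a Lefschetz computation of $I_w(E)\vert_{p_e}$ that you state without proof. That formula is correct (it is equivalent to \eqref{eq-intertwiner-fmla}), but note that the paper derives \eqref{eq-intertwiner-fmla} only \emph{after} Theorem~\ref{thm-weyl-kk}, so you would owe an independent argument. You also implicitly need that two classes in $\KK_T(\Flags,\pt)$ agreeing as maps $\K_T(\Flags)\to R(T)$ must coincide, which requires a nondegeneracy step. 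None of this is fatal, but the paper's Poincar\'e-duality move makes the simplification happen \emph{before} localization rather than after, so that both sides visibly land on the same $\K$-theory class.
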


\begin{proof}
The restriction map
\[
 \KK_G(\pt,\pt)\to  \KK_T(\pt,\pt)
 \]
 is injective, and so  we can prove (\ref{eq-weyleq1}) by calculating in $T$-equivariant $\KK$-theory.  Inserting (\ref{eq-F-localization}) into the left hand side of (\ref{eq-weyleq1}) gives
\[
[\pt \leftarrow F]\otimes [F\to \pt] \in \KK_T(\pt,\pt),
\]
which is equal to $|W|\cdot [\pt\to\pt]$, as required.

To prove (\ref{eq-weyleq2}) we shall use the Poincar\'e duality and induction isomorphisms
\[
\KK_G(\Flags,Y)\stackrel \cong \longrightarrow \K_G (Y\times \Flags) 
 \stackrel \cong \longrightarrow \K_T(Y) 
\]
of  (\ref{eq-pd-map}) and (\ref{eq-simple-kk-ind-iso}). 
  Of course we shall set   $Y$ equal to the flag variety. Under Poincar\'e duality the left-hand side of (\ref{eq-weyleq2})  maps to 
\begin{multline}
\label{eq-pd-calc}
 1_\Flags \otimes [\Flags\stackrel\delta\to \Flags\times \Flags]\otimes [\Flags\times \Flags\to \pt\times \Flags] \\
\otimes [\pt\times \Flags\leftarrow \Flags\times \Flags]\otimes
 [ \wdg^*T^*\Flags \boxtimes 1_\Flags] \in \K_G(\Flags\times \Flags) .
 \end{multline}
The composition $\Flags\to \Flags\times \Flags\to \pt \times \Flags$ is the canonical isomorphism, and so
 \[
 1_\Flags \otimes [\Flags\stackrel\delta\to \Flags\times \Flags]\otimes [\Flags\times \Flags\to \pt\times \Flags] = 1_{\pt \times \Flags} \in \K_G(\pt\times \Flags) .
 \]
It follows that (\ref{eq-pd-calc}) is  simply 
\[
  \wdg^*T^*\Flags \boxtimes 1_\Flags  \in \K_G(\Flags\times \Flags),
 \]
 which corresponds under the induction isomorphism to 
$
 \wdg^* T^*\Flags \in \K_T(\Flags) $.
Because of  (\ref{eq-F-localization}), this is equal to
 \[
 1_F \otimes [F\to \Flags]\in \K_T(\Flags),
 \]
 or in other words
 \[
 \sum_{w\in W}  1_\pt \otimes [\pt\stackrel {w} \to \Flags] ,
\]
where the image of the map labeled $w$ is  $\pt\cdot w\in \Flags$.  The individual terms in this sum are precisely the images of the intertwiners under the combined Poincar\'e duality and induction isomorphisms.
 \qed
\end{proof}

\subsection{Comparison with Weyl's Formula}
\label{sec-comparison}

To  interpret Theorem~\ref{thm-weyl-kk} in representation-theoretic terms we need  the vocabulary of roots and weights.  Here is a quick review.  

\begin{definition}
\label{def-flags}
Let $\g$ be the  {complexified}   Lie algebra   of  $G$.  The \emph{flag variety} $\Flags$ is the space    of  Borel subalgebras of $\g$.
\end{definition}

See for example \cite[Chap.\ 3]{MR1433132}.  The flag variety is a nonsingular projective algebraic variety, and in particular a closed complex manifold.  The compact group  $G$ acts transitively on $\Flags$ and the stabilizer of any point in $\Flags$  is a maximal torus $T\subseteq G$. So the choice of a $T$-fixed basepoint in $\Flags$ identifies the flag variety with the homogeneous space  $G/T$.

\begin{remark} These identifications  of $G/T$ with $\Flags$ yield  all the $G$-equivariant complex structures on $G/T$, although there are other  $G$-equivariant almost-complex structures.
\end{remark}

 Fix a basepoint in $\Flags$, that is to say a Borel subalgebra $\b\subseteq \g$, and let $T\subseteq G$ be the maximal torus that fixes $\b$.  Then $\b = \h\oplus \n$, where $\h$ is the complexification of the Lie algebra of $T$ and $\n= [ \b,\b]$. Moreover
\[
\g   = \overline{\n} \oplus \h \oplus \n .
\]
The vector space $\overline{\n}$, which is a representation of $T$, decomposes into $T$-eigenspaces, each of dimension one.  The corresponding eigenvalues are characters of $T$ and may be written 
in the form
\begin{equation}
\label{eq-weight-vs-character}
e(\alpha)\colon \exp(v)\mapsto \exp (\alpha(v)) ,
\end{equation}
where $v$ is in the Lie algebra of $T$ and  the  complex-linear forms $\alpha\in \h^*$ are by definition  the \emph{positive roots} of $G$ (with respect to the given choice of $\b$;  our conventions follow, for example, the book of Chriss and Ginzburg  \cite[Section 3.1]{MR1433132}).    The positive roots are examples of \emph{integral weights}, meaning elements   $\varphi \in \h^*$ that exponentiate, as in (\ref{eq-weight-vs-character}), to characters $e(\varphi)$ of $T$.

 Under the isomorphism 
$
\K_G(\Flags) \cong \K_T(\pt)$
given by restriction to the basepoint $\pt\in \Flags$, the $\K$-theory class of the bundle  $\wdg^*T^*\Flags$ maps to  \[
\Lambda = \prod_{\alpha> 0} (1- e(-\alpha)),
\]
where the product is over the positive roots (we are writing elements of $\K_T(\pt)$ as characters, that is to say as functions on $T$). This is because the tangent space to $\Flags$ at the basepoint identifies with $\overline\n$, and hence the dual vector space identifies with $\n$ as a representation of $T$.

The Weyl group $W\cong N(T)/T$ acts via the adjoint representation on $T$ and its Lie algebra, and therefore on  the set of all integral weights in the usual way:
\[
w(\psi)(X) = \psi (w^{-1}(X)).
\]
  The positive roots determine a partial order on the set of integral weights, and in each $W$-orbit there is a maximum element, called a \emph{dominant integral weight}.

The Weyl group also acts on $K_T(\pt)\cong R(T)$ via the formula 
\[
w(e(\psi)) = e(w(\psi)),
\]
 and  it is a basic fact that 
\begin{equation}
\label{eq-w-action-lambda}
w(\Lambda) =(-1)^w  e( \rho-w(\rho))\cdot \Lambda,
\end{equation}
for all $w\in W$, where as usual
\[
\rho = \tfrac 12 \sum _{\alpha>0} \alpha\in \h^*,
\]
and where the sign is the determinant of $w$ as it acts on the Lie algebra of $T$.

It follows from Lemma~\ref{lemma-localization2} that the intertwining operators $I_w$ fix the class $\wdg^*T^*\Flags\in K_G(\Flags) $.
From this and (\ref{eq-w-action-lambda}) it is straightforward to check  that the intertwining operators  $I_w$ act as follows on $\K_G(\Flags)\cong \K_T(\pt)$:
\begin{equation}
\label{eq-intertwiner-fmla}
e(\psi)\otimes I_w = (-1)^we(w^{-1}(\psi+\rho)-\rho)\in \K_T(\pt),
\end{equation}
\begin{remark}
\label{remark-atiyah-embedding}
Using these things we can complete some unfinished business.  We want to show that 
\[
[\pt \leftarrow \Flags]\otimes [\Flags\to \pt] = \id_\pt \in \KK_G(\pt,\pt)
\]
as in (\ref{eq-atiyah-embedding}).  To calculate the left-hand side we can     restrict to $T$ since restriction here is injective; call the result $\Phi\in \KK_T(\pt,\pt)$.  According to Theorem~\ref{thm-weyl-kk},
\[
\Phi\cdot  \prod_{\alpha >  0} (1- e(-\alpha))  = \sum (-1)^w e(w(\rho)-\rho) .
\]
Comparing the two sides, we see that  the only dominant weight occurring in $\Phi$ is the trivial weight. Since   the weights of $\Phi$ are acted on by $W$, the only weight of $\Phi$ is the trivial weight, and it occurs with multiplicity one.  Hence $\Phi$ is the trivial representation, as required.  See \cite{MR0228000} for a different, geometric approach.
\end{remark}

\begin{remark}
Let $V$ be a finite-dimensional representation of $G$.  The localization map associates to $V$   (the $\K$-theory class of) the complex of vector bundles 
\[
\wdg^{0} \n\otimes V \longleftarrow \wdg^{1} \n\otimes V \longleftarrow \cdots  \]
that fiberwise computes nilpotent Lie algebra homology (here $d=\dim (\n)$, or
 in other words the number of positive roots).  So the  localization map could equally well be defined to map $V$ to the alternating sum of the  bundles whose fibers are the homology spaces $H_p(\n,V)$. The connection between the Weyl character formula and nilpotent Lie algebra homology was thoroughly explored by Kostant in  \cite{MR0142696}. \end{remark}

By now we have encountered all the ingredients of Weyl's formula:

\begin{theorem}[Weyl Character Formula]
\label{thm-weyl}
There is a bijection between equivalence classes of irreducible finite-dimensional representations of $G$ and dominant integral weights under which the character of the representation associated to a dominant integral weight $\psi$ is equal to
\[
\frac{\sum_{w\in W} (-1)^w e(w(\psi + \rho)-\rho)}{    \prod_{\alpha >  0} (1- e(-\alpha))} 
\]
on $T$ \textup{(}the product is over the positive roots\textup{)}.  The highest weight of the representation is $\psi$. 
\end{theorem}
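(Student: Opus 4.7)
The plan is to derive the classical Weyl Character Formula from the $\KK$-theoretic identity (\ref{eq-weyleq2}) by evaluating both sides on an equivariant line bundle and passing to the maximal torus. For each dominant integral weight $\psi$, let $L_\psi$ denote the $G$-equivariant holomorphic line bundle on $\Flags$ whose class in $\K_T(\pt)$, under the restriction-to-basepoint isomorphism $\K_G(\Flags)\cong \K_T(\pt)$, equals $e(\psi)$.  Set
\[
V_\psi \;:=\; L_\psi \otimes [\Flags\to \pt] \;\in\; \K_G(\pt) = R(G),
\]
the virtual character produced by the equivariant Dolbeault index of $L_\psi$.

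Take the Kasparov product of identity (\ref{eq-weyleq2}) with $L_\psi$ on the left.  The left-hand side becomes
\[
V_\psi \otimes [\pt\leftarrow \Flags]\otimes [\wdg^*T^*\Flags]\in \K_G(\Flags),
\]
which under restriction to $T$ and the identification $\K_G(\Flags)\cong \K_T(\pt)$ becomes $\chi_{V_\psi}|_T \cdot \Lambda$, where $\Lambda = \prod_{\alpha>0}(1-e(-\alpha))$.  The right-hand side becomes $\sum_w L_\psi \otimes I_w$, which by formula (\ref{eq-intertwiner-fmla}) restricts to $\sum_w (-1)^w e(w^{-1}(\psi+\rho)-\rho)$; reindexing $w\leftrightarrow w^{-1}$ (using $(-1)^{w^{-1}}=(-1)^w$) rewrites this as $\sum_w (-1)^w e(w(\psi+\rho)-\rho)$.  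Equating the two sides and dividing by $\Lambda$ in the fraction field of $R(T)$ yields the claimed character formula for $\chi_{V_\psi}|_T$.

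It remains to verify that $V_\psi$ is an honest irreducible $G$-representation with highest weight $\psi$, and that $\psi\mapsto V_\psi$ is a bijection with the set of equivalence classes of irreducibles.  Applying the Weyl integration formula together with $\chi_{V_\psi}|_T \cdot \Lambda = \sum_w (-1)^w e(w(\psi+\rho)-\rho)$ reduces the inner product $\langle \chi_{V_\psi},\chi_{V_\psi}\rangle_G$ to
\[
\frac{1}{|W|}\int_T \Bigl|\sum_w (-1)^w e(w(\psi+\rho)-\rho)\Bigr|^2 \, dt,
\]
which equals $1$ by orthogonality of characters of $T$ (the weights $w(\psi+\rho)-\rho$ are distinct as $w$ varies, since $\psi+\rho$ is strictly dominant).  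Hence $V_\psi$ is $\pm$ an irreducible representation; inspecting the $w=e$ term in the numerator together with the expansion of $1/\Lambda$ as a sum of $e(-\mu)$ with $\mu$ a nonnegative sum of positive roots shows that $\psi$ appears as a weight of $V_\psi$ with multiplicity $+1$ and dominates every other weight, so $V_\psi$ is itself the irreducible with highest weight $\psi$.  The principal remaining point, surjectivity of $\psi\mapsto V_\psi$ onto the set of irreducibles, is the main obstacle beyond the $\KK$-theoretic input and follows from the classical fact that every irreducible finite-dimensional representation of $G$ possesses a unique highest weight, which is dominant integral.
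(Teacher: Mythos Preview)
Your argument is correct and follows the same route as the paper's own discussion: the character formula for $V_\psi=\Gamma(e(\psi))$ is obtained by applying (\ref{eq-weyleq2}) together with the intertwiner formula (\ref{eq-intertwiner-fmla}), and the paper likewise acknowledges that irreducibility and exhaustiveness require input beyond $\K$-theory, sketching in a footnote a Weyl-integration argument equivalent to your norm-one computation. The only minor variation is in how surjectivity is sourced---the paper's footnote argues combinatorially that the $V_\psi$ form a $\Z$-basis for $R(G)$, whereas you invoke the classical existence of highest weights---but both are explicitly external to the $\KK$-theoretic framework.
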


Let us write 
\begin{equation}
\label{eq-lambda-def}
\Lambda \colon \K_G (\pt) \longrightarrow  \K_G (\Flags )
\end{equation}
for the map induced by pullback from a point, followed by multiplication against $\wdg^* T^*\Flags \in \K_G (\Flags)$,  and 
\begin{equation}
\label{eq-gamma-def}
\Gamma \colon \K_G (\Flags ) \longrightarrow \K_G (\pt)
\end{equation}
for the wrong-way map induced from the collapse of $\Flags$ to a point.  Theorem~\ref{thm-weyl-kk} asserts that 
\begin{equation}
\label{eq-k-weyl1}
\Lambda\circ \Gamma = \sum_{I_w} I_w\colon \K_G (\Flags) \longrightarrow \K_G (\Flags)
\end{equation}
and 
\begin{equation}
\label{eq-k-weyl2}
\Gamma\circ \Lambda = |W|\cdot \id \colon \K_G(\pt) \longrightarrow \K_G(\pt) .
\end{equation}
According to equation~(\ref{eq-k-weyl1}), if $\psi$ is any integral weight, then the character of the virtual representation 
\[
V_\psi=\Gamma (e(\psi))\in \K_G(\pt)
\]
 is given by Weyl's formula, and it follows from the formula that if $\psi$ is dominant, then the highest weight of the representation is $\psi$.  The rest of the content of Theorem~\ref{thm-weyl}, that these virtual representations are actually irreducible representations, and that they are all of them, requires a bit more effort, going beyond $\K$-theory and our  concerns here.\footnote{A more thorough analysis of the combinatorial relations between the Weyl group  and the positive roots   shows  that the virtual representations $V_\psi$ associated to dominant weights form a basis for $\K_G(\pt)$.  Meanwhile the  Weyl integral formula implies that $\Gamma$ and $\Lambda$ are adjoint with respect to the natural inner products on $\K_G(\pt)$ and $\K_G(\Flags)\cong \K_T(\pt)$ for which the irreducible representations are orthonormal:
\begin{equation*}
\label{eq-adjoint-rel}
\bigl\langle x, \Gamma(z) \bigr\rangle_G
=
\bigl\langle \Lambda(x),   z\bigr\rangle_T
\end{equation*}
From this it follows easily that $\Gamma$ maps dominant integral weights to irreducible representations.
 }

 \subsection{A Second Look at KK-Theory for the Flag Variety}
 \label{sec-bott-sam}

We shall continue to borrow from Lie theory and  sketch a calculation of the ring $\KK_G(\Flags,\Flags)$ (which we shall not use later).  In view of the Poincar\'e duality isomorphism, this is effectively a problem in equivariant topological $\K$-theory, and indeed it is one that has been addressed and solved in this guise (for example the discussion here closely parallels \cite{MR2653685} in many places).  

Let $\alpha$ be a simple positive root.  Associated to it there is a \emph{generalized flag variety} $\Flags_\alpha$ consisting of all subalgebras $\mathfrak p\subseteq \g$ conjugate to 
\[
\mathfrak p_\alpha = \overline \n_\alpha \oplus \b ,
\]
where $\overline \n_\alpha$ is the $\alpha$-root space in $\overline \n$.  There is a a $G$-equivariant submersion $ \Flags \to \Flags_\alpha$ given by inclusion of subalgebras $\b\subseteq \mathfrak p$. 
 Define
\begin{equation}
\label{eqn-bott-samelson-proj}
\Pi_\alpha = [\Flags \to \Flags_\alpha]\otimes [\Flags_\alpha \leftarrow \Flags] \in \KK_G(\Flags, \Flags).
\end{equation}

\begin{proposition}
\label{prop-bs-idempotent}
If $\alpha$ is any simple root, then 
\[
\Pi_\alpha \otimes \Pi_\alpha = \Pi _\alpha \in \KK_G(\Flags,\Flags).
\]
\end{proposition}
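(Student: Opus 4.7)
The plan is to write $p \colon \Flags \to \Flags_\alpha$ for the canonical $G$-equivariant submersion appearing in the definition of $\Pi_\alpha$. By the very definition of $\Pi_\alpha$ and the associativity of the Kasparov product,
\[
\Pi_\alpha \otimes \Pi_\alpha = [\Flags \xrightarrow{p} \Flags_\alpha] \otimes \bigl( [\Flags_\alpha \xleftarrow{p} \Flags] \otimes [\Flags \xrightarrow{p} \Flags_\alpha] \bigr) \otimes [\Flags_\alpha \xleftarrow{p} \Flags],
\]
so the entire proposition reduces to the single identity
\[
[\Flags_\alpha \xleftarrow{p} \Flags] \otimes [\Flags \xrightarrow{p} \Flags_\alpha] = \id \in \KK_G(\Flags_\alpha, \Flags_\alpha).
\]

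The heart of the matter is that, because $\alpha$ is a \emph{simple} root, the parabolic $\mathfrak{p}_\alpha$ is minimal above $\b$, and so each fiber of $p$ is isomorphic to $P_\alpha/B \cong \mathbb{CP}^1$. To verify the reduced identity I would work with continuous transformations: at level $Z$ the composite on the left sends $y \in \K_G(\Flags_\alpha \times Z)$ to $q_!(q^*(y))$, where $q = p \times \id_Z$ is again a $\mathbb{CP}^1$-fibration. By the Atiyah-Singer families index theorem recorded in Section~\ref{sec-index-thm}, $q_!$ computes the family Dolbeault index, and the projection formula for direct images yields
\[
q_!(q^*(y)) = q_!(1_{\Flags \times Z}) \cdot y.
\]
The class $q_!(1)$ is the family Dolbeault index of the trivial bundle along a $\mathbb{CP}^1$-fibration, and it equals $1$ since $H^0(\mathbb{CP}^1, \mathcal{O}) = \mathbb{C}$ and $H^1(\mathbb{CP}^1, \mathcal{O}) = 0$. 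Hence $q_!(q^*(y)) = y$ for every $y$, which gives the reduced identity and in turn $\Pi_\alpha \otimes \Pi_\alpha = \Pi_\alpha$.

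The principal obstacle is the careful justification of the projection formula for wrong-way maps within the continuous-transformation formalism. Although the formula is classical in topological $\K$-theory, in our setup one must combine the $\K_G(Z)$-linearity of Definition~\ref{def-cont-transf}, the external compatibility of Lemma~\ref{rem-equiv-cont-lemma}, and the base-change identity of Lemma~\ref{lemma-base-change} to conclude that $[\Flags \xrightarrow{p} \Flags_\alpha]$ interacts correctly with multiplication by classes pulled back from $\Flags_\alpha$. Once this bookkeeping is in place the $\mathbb{CP}^1$-cohomology computation is trivial, as is the role of the simple-root hypothesis (used only to identify the fibers).
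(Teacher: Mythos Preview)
Your reduction to the identity
\[
[\Flags_\alpha \xleftarrow{p} \Flags] \otimes [\Flags \xrightarrow{p} \Flags_\alpha] = \id \in \KK_G(\Flags_\alpha,\Flags_\alpha)
\]
is exactly the reduction the paper makes.  After that point the two arguments diverge.

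The paper does not invoke the projection formula or the Dolbeault cohomology of $\mathbb{CP}^1$ directly.  Instead it writes $\Flags \cong \Ind_{G_\alpha}^G(G_\alpha/T)$, where $G_\alpha$ is the isotropy group of the basepoint in $\Flags_\alpha$, and applies the induction homomorphism $\Ind_{G_\alpha}^G$ of (\ref{eq-ind-map-kk}) to reduce to the identity
\[
[\pt \leftarrow G_\alpha/T]\otimes[G_\alpha/T \to \pt] = \id_\pt \in \KK_{G_\alpha}(\pt,\pt),
\]
which is precisely Atiyah's formula (\ref{eq-atiyah-embedding}) for the (semisimple rank one) compact group $G_\alpha$.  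That formula has already been established in Remark~\ref{remark-atiyah-embedding}, so nothing further is needed.

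Your route and the paper's are ultimately computing the same thing: the $G_\alpha$-equivariant holomorphic Euler characteristic of $G_\alpha/T\cong\mathbb{CP}^1$ with trivial coefficients is the trivial representation.  Your argument reaches this via the projection formula for wrong-way maps together with the families index theorem, and you correctly flag that the projection formula needs some bookkeeping in the continuous-transformation formalism.  The paper's approach sidesteps that bookkeeping entirely by using the induction machinery already in place; what you gain with your version is a more self-contained computation that does not have to pass through the proof of (\ref{eq-atiyah-embedding}).
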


\begin{proof}
We want to show that 
\[
 [\Flags \to \Flags_\alpha]\otimes [\Flags_\alpha \leftarrow \Flags]
 \otimes  [\Flags \to \Flags_\alpha]\otimes [\Flags_\alpha \leftarrow \Flags] =\ [\Flags \to \Flags_\alpha]\otimes [\Flags_\alpha \leftarrow \Flags],
 \]
and so of course it suffices to show that 
\begin{equation}
\label{eq-bs-proj1}
 [\Flags_\alpha\leftarrow  \Flags ]\otimes [\Flags\to \Flags_\alpha] = \id_{\Flags_\alpha}\in \KK_G(\Flags_\alpha, \Flags_\alpha).
 \end{equation}
But if $G_\alpha\subseteq G$ is the stabilizer of the basepoint in $\Flags_\alpha$, then
$ \Flags \cong \Ind_{G_\alpha}^G G_\alpha/T $,
and   (\ref{eq-bs-proj1}) follows   from the identity  
\[
[\pt \leftarrow G_\alpha/T]\otimes [G_\alpha/T\to \pt] =\id_\pt \in \KK_{G_\alpha}(\pt,\pt) 
\]
using the induction homomorphism (\ref{eq-ind-map-kk}). The latter identity  is Atiyah's formula (\ref{eq-atiyah-embedding})  for the group $G_\alpha$. \qed
\end{proof}

The idempotents $\Pi_\alpha$ are related to intertwiners, as follows:

\begin{proposition}
\label{prop-divided-diff}
If $\alpha$ is a simple root, then 
\[
\Pi_\alpha\otimes [1-e(-\alpha)]   = \Id_{\Flags} + I_{s_\alpha} \in \KK_G(\Flags,\Flags),
\]
where $s_\alpha\in W$ is the reflection associated to $\alpha$.
\end{proposition}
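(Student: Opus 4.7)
The plan is to apply Poincar\'e duality (Proposition~\ref{prop-poincare}) and the induction isomorphism~\eqref{eq-simple-kk-ind-iso} in order to translate the identity into an equation in $\K_T(\Flags)$, which can then be verified by a direct computation on the $\pi$-fiber $\mathbb{CP}^1_\alpha \subset \Flags$ through the basepoint $\pt$, where $\pi\colon \Flags\to \Flags_\alpha$.  I would apply the composition
\[
\KK_G(\Flags,\Flags) \xrightarrow{\widehat{\phantom{-}}} \K_G(\Flags\times \Flags) \xrightarrow{\cong} \K_T(\Flags),
\]
where the second arrow is an instance of~\eqref{eq-simple-kk-ind-iso} that restricts the source factor of $\Flags\times \Flags$ (the second factor, in the convention $\K(Y\times X)$ of~\eqref{eq-pd-map}) to $\pt$ and then forgets to $T$-equivariance.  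The choice of factor matters: the multiplication operator $[1-e(-\alpha)]$ acts on the target factor, so restricting the source factor leaves $[1-e(-\alpha)]$ as a nontrivial line-bundle class on the remaining copy of $\Flags$.

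Under this composition, $\Id_\Flags \mapsto [\pt]$ (from restricting the diagonal), and $I_{s_\alpha}\mapsto [s_\alpha\cdot\pt]$ (from restricting the graph of $s_\alpha$, which is the Poincar\'e dual of $I_{s_\alpha}$ by the example following~\eqref{eq-pd-map}).  The main technical point is to show $\Pi_\alpha\mapsto [\mathbb{CP}^1_\alpha\hookrightarrow \Flags]$: by Lemma~\ref{lemma-base-change}, $\widehat{\Pi_\alpha}$ equals the $\K$-theory class of the correspondence $Z=\Flags\times_{\Flags_\alpha}\Flags\subset \Flags\times \Flags$, whose intersection with $\Flags\times \{\pt\}$ is $\mathbb{CP}^1_\alpha \times\{\pt\}$.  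Composing with $[1-e(-\alpha)]$ then produces the $\K_T(\Flags)$-class $(1-L_{-\alpha})\cdot [\mathbb{CP}^1_\alpha]$, where $L_{-\alpha}$ is the homogeneous line bundle on $\Flags$ with $T$-character $e(-\alpha)$ at $\pt$.

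The proposition therefore reduces to the $\K_T(\Flags)$-identity
\[
(1 - L_{-\alpha})\cdot [\mathbb{CP}^1_\alpha]\ =\ [\pt] + [s_\alpha\cdot \pt].
\]
By the projection formula applied to $\iota\colon \mathbb{CP}^1_\alpha\hookrightarrow \Flags$, the left-hand side equals $\iota_*(1 - L_{-\alpha}|_{\mathbb{CP}^1_\alpha})$.  On the fiber $\mathbb{CP}^1_\alpha\cong \mathbb{CP}^1$ the line bundle $L_{-\alpha}$ restricts to $\mathcal O(-2)$, since the holomorphic tangent bundle $T\mathbb{CP}^1_\alpha$ coincides with $L_\alpha|_{\mathbb{CP}^1_\alpha}=\mathcal O(2)$.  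The Koszul resolution
\[
0\to \mathcal O(-2)\to \mathcal O\to \mathcal O_{\{\pt\}\cup \{s_\alpha\cdot\pt\}}\to 0
\]
of the reduced divisor of the two $T$-fixed points yields $1-\mathcal O(-2) = [\pt]_{\mathbb{CP}^1_\alpha}+[s_\alpha\cdot\pt]_{\mathbb{CP}^1_\alpha}$ in $\K_T(\mathbb{CP}^1_\alpha)$; pushing forward by $\iota_*$ completes the verification.
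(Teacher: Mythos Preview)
Your argument is correct, but the paper's proof is considerably shorter and takes a different route. The paper simply observes that when $G=G_\alpha$ (the stabilizer of the basepoint in $\Flags_\alpha$), the flag variety is $\mathbb{CP}^1$, the partial flag variety $\Flags_\alpha$ is a point, the Weyl group is $\{1,s_\alpha\}$, and $[\wdg^*T^*\mathbb{CP}^1]=[1-e(-\alpha)]$; so the asserted identity is literally equation~\eqref{eq-weyleq2} of Theorem~\ref{thm-weyl-kk}. The general case then follows by applying the induction homomorphism $\Ind_{G_\alpha}^G$ of~\eqref{eq-ind-map-kk}, exactly as in the proof of Proposition~\ref{prop-bs-idempotent}. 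Your approach instead unwinds both sides through the Poincar\'e duality/induction isomorphism $\KK_G(\Flags,\Flags)\cong\K_T(\Flags)$ used in the proof of Theorem~\ref{thm-weyl-kk}, and then verifies the resulting $\K_T$-identity by a Koszul computation on $\mathbb{CP}^1_\alpha$. That final computation---the section of $T\mathbb{CP}^1_\alpha$ vanishing at the two $T$-fixed points is exactly the Killing vector field---is nothing other than Lemma~\ref{lemma-localization2} specialized to $X=\mathbb{CP}^1_\alpha$, which is the core of the Weyl formula for $G_\alpha$. So the two proofs ultimately rest on the same localization calculation; the paper's framing is cleaner here because it recognizes the rank-one case as an instance of a theorem already in hand and passes to general $G$ via the induction functor rather than redoing Poincar\'e duality by hand. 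Your route has the minor virtue of being self-contained (it does not invoke Theorem~\ref{thm-weyl-kk}), but you should note that the restriction step $\K_G(\Flags\times\Flags)\to\K_T(\Flags)$ applied to $[Z\hookrightarrow\Flags\times\Flags]$ is most cleanly justified via the compatibility of the induction isomorphism~\eqref{eq-ind-map} with wrong-way maps (since $Z\cong\Ind_T^G\mathbb{CP}^1_\alpha$), rather than by a direct appeal to Lemma~\ref{lemma-base-change}, whose hypothesis requires a submersion.
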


\begin{proof} If $G=G_\alpha$, then this is the Weyl character formula.  The general case follows by induction from $G_\alpha$ to $G$ as above. 
\qed
\end{proof}

The $\Pi_\alpha$, at least when considered as operators on $\K_G(\Flags)$ by Kasparov product, are called \emph{Demazure operators}. Compare \cite[Sec.\ 1]{MR0430001} or \cite[Sec.\ 1]{MR2653685}.  
From the proposition we obtain the following formula for the action of $\Pi_\alpha$ on $\K_G(\Flags)$:
\begin{equation}
\label{eq-div-diff-ops1}
e(\phi)\otimes \Pi_\alpha  = \frac{e(\phi)-e(-\alpha)e(s_\alpha(\phi))}{1-e(-\alpha)}\in \K_G(\Flags).
\end{equation}
In addition the commutation relations in $\KK_G(\Flags, \Flags)$ between the elements $\Pi_\alpha$ and elements of the subring $\K_G(\Flags)\subseteq \KK_G(\Flags,\Flags)$ are as follows:
\begin{equation}
\label{eq-div-diff-ops2}
[e(\phi)]\otimes \Pi_\alpha = \Bigl [ \frac{e(\phi)-e(s_\alpha(\phi))}{1-e(-\alpha)}\Bigr] + \Pi_\alpha\otimes [e(s_\alpha(\phi))]\in \KK_G(\Flags,\Flags).
\end{equation}

\begin{remark} 
In the reverse direction, Proposition~\ref{prop-divided-diff} gives a new formula for the intertwining operators.  It is of some interest because its constituent parts are maps and bundles that are holomorphic and equivariant for the full group of symmetries of $\Flags$, namely the complexification $G_\C$ of $G$ (in contrast to the $G$-equivariant maps from $\Flags$ to itself that we considered in Section~\ref{sec-intertwiners}).  

Another approach to intertwiners uses the Bruhat cells in $\Flags\times \Flags$.  These are the $G_\C$-orbits, and each one contains the graph of a unique $G$-equivariant map from $\Flags$ to itself.  So we may label them as 
\begin{equation}
\label{eq-bruhat}
\Cell^w\subseteq \Flags\times \Flags,
\end{equation}
where $w\in W$ (recall that  we initially defined the Weyl group to be the group of $G$-equivariant self-maps of the flag variety).  The $\K_G$-theory wrong-way maps associated to the two coordinate projections from $\Cell^w$ to $\Flags$  are isomorphisms. Indeed both projections  give $\Cell^w$  the structure of a complex $G$-equivariant vector bundle (the zero section is the graph of $w$), and so the Thom isomorphism theorem applies. Composing one with the inverse of the other gives an intertwiner.  The commuting diagram
\[
\xymatrix@C=10pt@R=8pt{
&\Cell^w\ar[dl]\ar[dr]&\\
 \Flags\ar[rr]_w&&\Flags ,
 }
 \]
where the left and right downwards maps are the left and right projections, shows that the intertwiner defined this way is the same as ours.  Compare \cite{EY1}.
\end{remark}
  
  Returning to the idempotents $\Pi_\alpha$, we next consider how the idempotents associated to two distinct simple roots interact.
  
\begin{proposition}[Demazure \cite{MR0430001}]
The operators $\Pi_\alpha\in \KK_G(\Flags,\Flags)$ associated to simple roots satisfy the braid relations.
\end{proposition}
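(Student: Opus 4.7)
The plan is to reduce the braid relations for the $\Pi_\alpha$ to Demazure's original theorem \cite{MR0430001}, which asserts that the divided difference operators
\[
\pi_\alpha(x) = \frac{x - e(-\alpha)\,s_\alpha(x)}{1 - e(-\alpha)}
\]
on $R(T) = \K_T(\pt)$ satisfy the braid relations. The translation to $\KK_G(\Flags,\Flags)$ proceeds in three steps, all modelled on the strategy already used in the proofs of Propositions~\ref{prop-bs-idempotent} and~\ref{prop-divided-diff}: verify the identity on K-theory after passing through the induction isomorphism $\K_G(\Flags\times Z) \cong \K_T(Z)$.

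First, I would fix an arbitrary $G$-space $Z$ and use (\ref{eq-simple-kk-ind-iso}) to identify $\K_G(\Flags\times Z)$ with $\K_T(Z)$. Under this identification, the intertwiner $I_{s_\alpha}\in \KK_G(\Flags,\Flags)$ acts on $\K_T(Z)$ as the natural Weyl group action on $T$-equivariant K-theory coming from $N(T)\subseteq G$. For $Z=\pt$ this is the content of (\ref{eq-intertwiner-fmla}); the general case is a formal consequence of the definition of $I_w$ as the wrong-way class of the $G$-equivariant self-diffeomorphism $w\colon \Flags\to \Flags$ and the naturality built into Definition~\ref{def-cont-transf}.

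Second, combining the previous step with Proposition~\ref{prop-divided-diff} (which gives $\Pi_\alpha\otimes(1-e(-\alpha)) = \Id + I_{s_\alpha}$), the Kasparov product by $\Pi_\alpha$ acts on $\K_T(Z)$ as the Demazure operator $\pi_\alpha$. Demazure's theorem, originally proved for $R(T)$, extends verbatim to any $R(T)$-module carrying a compatible $W$-action, since the operators are built purely from multiplication by elements of $R(T)$ and from the $W$-action, and the braid relations are already satisfied in $W$ itself. Applied to $\K_T(Z)$ this yields
\[
\underbrace{\Pi_\alpha \otimes \Pi_\beta \otimes \Pi_\alpha \otimes \cdots}_{m_{\alpha\beta}\ \text{factors}} \ =\ \underbrace{\Pi_\beta \otimes \Pi_\alpha \otimes \Pi_\beta \otimes \cdots}_{m_{\alpha\beta}\ \text{factors}}
\]
as endomorphisms of $\K_G(\Flags\times Z)$ for every $G$-space $Z$. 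Since an element of $\KK_G(\Flags,\Flags)$ is by definition a continuous natural transformation, and is therefore determined by its values on all such $Z$, the two sides of this relation are equal in $\KK_G(\Flags,\Flags)$.

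The main obstacle is the first step: one must carefully trace the right $W$-action on $\Flags$ through the induction isomorphism to check that $I_w$ implements the natural $W$-action on $\K_T(Z)$ \emph{uniformly in the parameter space} $Z$, not merely in the special case $Z=\pt$ recorded in (\ref{eq-intertwiner-fmla}). Once this compatibility is established, the remainder of the argument is formal, invoking only Demazure's classical braid-relation calculation and the naturality that is intrinsic to the definition of $\KK$-theory used here.
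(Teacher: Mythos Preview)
Your approach is essentially the same as the paper's: both reduce to Demazure's original computation by way of Proposition~\ref{prop-divided-diff}. The paper's ``proof'' is in fact a one-line citation of that computation (together with references to later simplifications), whereas you spell out the reduction in more detail, in particular addressing the parameter space $Z$ built into the definition of $\KK_G(\Flags,\Flags)$.

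One point deserves sharpening. You assert that Demazure's theorem ``extends verbatim to any $R(T)$-module carrying a compatible $W$-action, since the operators are built purely from multiplication by elements of $R(T)$ and from the $W$-action.'' Strictly speaking the Demazure operators involve a \emph{division} by $1-e(-\alpha)$, so on an arbitrary $R(T)$-module neither the formula nor the braid identity (which Demazure verifies after clearing denominators) is automatic; one needs either that multiplication by $1-e(-\alpha)$ is injective, or that the relation holds already in the abstract algebra generated by the $\Pi_\alpha$ and $R(T)$ subject to (\ref{eq-div-diff-ops2}) and the idempotent relation. The second option is indeed what Demazure's computation shows, and is what the paper's phrase ``may be checked by computation using Proposition~\ref{prop-divided-diff}'' refers to: the commutation relation (\ref{eq-div-diff-ops2}) together with $\Pi_\alpha^2=\Pi_\alpha$ lets one expand both sides of the braid relation directly in $\KK_G(\Flags,\Flags)$ and compare. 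Alternatively, and perhaps more simply than tracking all $Z$, you could observe that by Poincar\'e duality $\KK_G(\Flags,\Flags)\cong \K_T(\Flags)$ is torsion-free over $R(T)$, so Proposition~\ref{prop-divided-diff} determines $\Pi_\alpha$ uniquely and Demazure's identity can be verified once in this single ring. Either way the argument goes through; the detour through arbitrary $R(T)$-modules is not needed.
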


\begin{proof}
The braid relations may be checked by computation  using Proposition~\ref{prop-divided-diff} (although as Demazure writes, the calculation  ``est clair si $m=2$, facile pour $m=3$, faisable pour $m=4$ et \'epouvantable pour $m=6$").  See also \cite{MR1044959,MR957070} for some improvements on the brute force approach.\qed
\end{proof}

\begin{proposition}
Let $ \alpha_1,\dots, \alpha_n$ be a reduced list of simple roots.  
The class 
\begin{equation}
\label{eq-def-pi-w}
\Pi_{\alpha_1}\otimes\cdots \otimes \Pi_{ \alpha_n}\in \KK_G(\Flags,\Flags)
\end{equation}
 depends only on the element $w=s_{\alpha_1}\cdots s_{\alpha_n}\in W$.  The subring of $\KK_G(\Flags,\Flags)$ generated by the elements $\Pi_\alpha$ associated to simple roots is the universal ring generated by idempotents $\Pi_\alpha$ subject only to the braid relations, and it is spanned  over $\Z$  by the elements \textup{(}\ref{eq-def-pi-w}\textup{)}.
\end{proposition}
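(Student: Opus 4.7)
The plan is to reduce the statement to two standard Coxeter-theoretic inputs: Matsumoto's theorem on reduced expressions, and the classical linear independence of iterated Demazure operators acting on $\K_T(\pt)$. For the first assertion — that the product $\Pi_{\alpha_1}\otimes\cdots\otimes\Pi_{\alpha_n}$ depends only on $w = s_{\alpha_1}\cdots s_{\alpha_n}$ when the word is reduced — I will invoke Matsumoto's theorem: any two reduced expressions for a given $w \in W$ are related by a finite sequence of braid moves, without any appeal to the quadratic relations $s_\alpha^2 = e$. Combined with the braid relations for the $\Pi_\alpha$ proved in the previous proposition, this forces the two iterated Kasparov products to coincide. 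I write $\Pi_w \in \KK_G(\Flags,\Flags)$ for the resulting well-defined class.

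For the spanning statement I will combine this with the idempotence $\Pi_\alpha \otimes \Pi_\alpha = \Pi_\alpha$ of Proposition~\ref{prop-bs-idempotent}. Given $w \in W$ and a simple root $\alpha$: if $\ell(ws_\alpha) > \ell(w)$, then appending $s_\alpha$ to a reduced expression for $w$ yields one for $ws_\alpha$, so $\Pi_w \otimes \Pi_\alpha = \Pi_{ws_\alpha}$. If instead $\ell(ws_\alpha) < \ell(w)$, then $w$ admits a reduced expression ending in $s_\alpha$, giving $\Pi_w = \Pi_{ws_\alpha} \otimes \Pi_\alpha$, so that $\Pi_w \otimes \Pi_\alpha = \Pi_{ws_\alpha} \otimes \Pi_\alpha \otimes \Pi_\alpha = \Pi_w$ by idempotence. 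Since the $\Z$-span of $\{\Pi_w\}_{w\in W}$ contains $\Pi_e = \id_\Flags$ and is closed under right multiplication by every generator, it coincides with the subring generated by the $\Pi_\alpha$.

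The main obstacle is the universality claim, which is equivalent to asserting that the classes $\{\Pi_w\}_{w\in W}$ are $\Z$-linearly independent in $\KK_G(\Flags,\Flags)$ (for then every relation among the $\Pi_\alpha$ is forced by idempotence and the braid relations, via the spanning argument above). My plan is to pass to the induced action on $\K_G(\Flags) \cong \K_T(\pt)$ via Kasparov product. By formula~(\ref{eq-div-diff-ops1}), $\Pi_\alpha$ acts as the Demazure divided-difference operator $D_\alpha$. Working in the semidirect-product algebra $F \rtimes W$, where $F = \mathrm{Frac}(\K_T(\pt))$, I will write $D_\alpha = (1 - e(-\alpha)\sigma_\alpha)/(1 - e(-\alpha))$ and expand $D_w = D_{\alpha_1}\cdots D_{\alpha_n}$ for a reduced expression. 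By Dedekind's lemma on linear independence of characters, the operators $\{\sigma_v\}_{v\in W}$ are $F^W$-linearly independent in $\mathrm{End}_{F^W}(F)$; and the subword property of Bruhat order together with a straightforward induction on $\ell(w)$ shows that only elements $v\le w$ contribute to $D_w$ in $F[W]$, while the coefficient of $w$ itself is an explicit nonzero element of $F$. This triangularity forces the $D_w$ to be $F$-linearly, hence $\Z$-linearly, independent — which finally lifts back to the required independence of the $\Pi_w$ in $\KK_G(\Flags,\Flags)$, completing the proof.
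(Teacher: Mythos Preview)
Your argument is correct. The paper's own proof of this proposition is a bare citation to Bourbaki, so you are supplying considerably more than the paper does at this point.

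The first two parts of your argument (Matsumoto's theorem for well-definedness of $\Pi_w$, and the length-dichotomy plus idempotence for spanning) are exactly the standard Coxeter-theoretic unpacking of what the Bourbaki reference contains. Where you genuinely diverge is in the universality/independence step. The paper does not justify independence here at all; it is only established later, and by a different route: the subsequent lemma shows geometrically, via the Bott--Samelson resolutions and the Bruhat stratification of $\Flags$, that the $\Pi_w$ form a $\K_G(\Flags)$-basis of the full ring $\KK_G(\Flags,\Flags)$, which of course implies $\Z$-independence. Your approach instead detects independence through the action on $\K_G(\Flags)\cong\K_T(\pt)$, expanding the Demazure operators in the twisted group algebra $F\rtimes W$ and using Bruhat-triangularity of the $\sigma_v$-coefficients together with the linear independence of field automorphisms. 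This is more elementary and entirely self-contained (it needs no geometry beyond formula~(\ref{eq-div-diff-ops1})), whereas the paper's later argument yields the stronger module-basis statement but at the cost of invoking the Bruhat decomposition and properties of Bott--Samelson varieties. One small terminological point: the independence of distinct automorphisms over the field itself is usually attributed to Artin rather than Dedekind, though the two results are close cousins.
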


\begin{proof}
See \cite[Chap~4, Sec~1, Prop~5]{MR1890629}.  \qed
\end{proof}

\begin{definition}
If $w\in W$, then denote  by $\Pi_w\in \KK_G(\Flags,\Flags)$ the Kasparov product  \textup{(}\ref{eq-def-pi-w}\textup{)}.
\end{definition}

These elements   constitute a basis for $\KK_G(\Flags,\Flags)$ (as either a left or right module over $\K_G(\Flags)$).  We can  see this by making use of the following additional geometric ideas.

\begin{definition}
Let $\alpha   =(\alpha_1,\dots, \alpha_n)$ be an ordered list of simple roots (with repetitions or reoccurrences allowed). Define
\begin{multline}
\label{eq-bs-def}
\BS^{\alpha}   =  \bigl \{\, (w_0,\dots  ,w_n) \in \Flags \times \cdots \times \Flags: \, \pi_{\alpha_k}(w_{k-1}) = \pi_{\alpha_k}(w_{k})  \quad \forall k   \,\bigr \} ,
\end{multline}
where $\pi_{\alpha_k}$ is the projection from $\Flags$ onto $\Flags_{\alpha_k}$.
In addition, denote by  
\[
p_0,p_n\colon \BS^\alpha\longrightarrow \Flags 
\]
the projections onto the $w_0$- and $w_n$-coordinates.
\end{definition}

\begin{remark} 
\label{rem-bs-rem}
The 
 \emph{Bott-Samelson variety} $\BS ^\alpha _{\pt}$ associated to the list of simple roots $\alpha = (\alpha_1,\dots, \alpha_n)$ and to a given basepoint $\pt \in \Flags$ is  the inverse image of the basepoint  under $p_n$; see  \cite{MR0105694,MR0376703}.   This explains the notation.
\end{remark}

\begin{definition}
Let $ \alpha = (\alpha_1,\dots , \alpha_n)$ be any ordered list of simple roots. Denote by 
$
\Pi_{\alpha_1,\dots,\alpha_n}\in \KK_G(\Flags,\Flags)$ the Kasparov product
\[ 
[\Flags\stackrel{p_0} \longleftarrow  \BS^\alpha ] 
\otimes [\BS^\alpha\stackrel{p_n}\longrightarrow \Flags] .
\]
\end{definition}

\begin{example}
\label{ex-bs-length1}
If the list has length one, then the new definition of $\Pi_\alpha$ agrees with the old one in view of the pullback diagram
\[
\xymatrix@C=12pt@R=12pt{\BS^\alpha\ar[r]^{p_1} \ar[d]_{p_0}&\Flags \ar[d] \\
\Flags \ar[r] & \Flags_\alpha 
}
\]
and Lemma~\ref{lemma-base-change}.
\end{example}

\begin{lemma} If $\alpha_1,\dots, \alpha_n$ is any list of simple roots, then
\begin{equation*}
\Pi_{\alpha_1,\dots, \alpha_n}
 = \Pi_{\alpha_1}\otimes \cdots \otimes \Pi_{\alpha_n}\in \KK_G(\Flags,\Flags).
\end{equation*}
\end{lemma}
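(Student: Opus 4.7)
The plan is to proceed by induction on $n$, using base change (Lemma~\ref{lemma-base-change}) repeatedly to convert each pair of ``wrong-way / pullback'' classes whose middle object is a copy of $\Flags$ into a ``pullback / wrong-way'' pair whose middle object is a fiber product. Throughout, the maps $\pi_{\alpha_k}\colon \Flags\to\Flags_{\alpha_k}$ are $G$-equivariant submersions of closed complex manifolds, so all the hypotheses of Lemma~\ref{lemma-base-change} are satisfied and all fiber products are closed complex $G$-manifolds.

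The base case $n=1$ is Example~\ref{ex-bs-length1}. For the inductive step, assume the lemma holds for length $n-1$, and write
\[
\Pi_{\alpha_1}\otimes\cdots\otimes\Pi_{\alpha_n}
=\Pi_{\alpha_1,\dots,\alpha_{n-1}} \otimes \Pi_{\alpha_n}
=[\Flags\stackrel{p_0}\leftarrow \BS']\otimes [\BS'\stackrel{p_{n-1}}\to\Flags]\otimes[\Flags\to\Flags_{\alpha_n}]\otimes[\Flags_{\alpha_n}\leftarrow\Flags],
\]
where $\BS' = \BS^{\alpha_1,\dots,\alpha_{n-1}}$. The inner pair
$[\BS'\stackrel{p_{n-1}}\to\Flags]\otimes[\Flags\to\Flags_{\alpha_n}]$
composes by functoriality of wrong-way classes to $[\BS'\to \Flags_{\alpha_n}]$, giving the shape $[Z\to Y]\otimes[Y\leftarrow W]$ with $Z=\BS'$, $Y=\Flags_{\alpha_n}$, $W=\Flags$, to which Lemma~\ref{lemma-base-change} applies.

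Applying base change to the pullback square
\[
\xymatrix@C=18pt@R=14pt{
\BS' \times_{\Flags_{\alpha_n}} \Flags \ar[r]\ar[d] & \Flags \ar[d] \\
\BS' \ar[r] & \Flags_{\alpha_n}
}
\]
we obtain
\[
[\BS'\to\Flags_{\alpha_n}]\otimes[\Flags_{\alpha_n}\leftarrow\Flags]
=[\BS'\leftarrow \BS'\times_{\Flags_{\alpha_n}}\Flags]\otimes[\BS'\times_{\Flags_{\alpha_n}}\Flags\to\Flags].
\]
The fiber product $\BS'\times_{\Flags_{\alpha_n}}\Flags$ is canonically identified with $\BS^{\alpha_1,\dots,\alpha_n}$, with the left projection forgetting $w_n$ and the right projection being the map $p_n$ to the last $\Flags$-coordinate; under this identification the composed left projection $\Flags\leftarrow\BS'\leftarrow\BS^{\alpha_1,\dots,\alpha_n}$ is $p_0$. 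Reassembling and using functoriality of pullback classes for the first pair and of wrong-way classes for the last pair of Kasparov factors yields
\[
\Pi_{\alpha_1}\otimes\cdots\otimes\Pi_{\alpha_n}
= [\Flags\stackrel{p_0}\leftarrow \BS^{\alpha_1,\dots,\alpha_n}]\otimes[\BS^{\alpha_1,\dots,\alpha_n}\stackrel{p_n}\to\Flags]
= \Pi_{\alpha_1,\dots,\alpha_n},
\]
as desired.

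The only nontrivial step is the application of Lemma~\ref{lemma-base-change}; once one sets up the iterated Kasparov product in the form above and recognizes that $\BS'\times_{\Flags_{\alpha_n}}\Flags$ is literally the defining fiber product appearing in \eqref{eq-bs-def}, the argument is mechanical. The potential obstacle, and the place deserving care, is verifying that the natural maps one writes down (in particular the composed left projection) agree with the distinguished projections $p_0$ and $p_n$ in the definition of $\BS^{\alpha_1,\dots,\alpha_n}$; this is a direct diagram chase using the definition of the iterated fiber product.\qed
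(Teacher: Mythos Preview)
Your proposal is correct and follows exactly the approach indicated by the paper, which says only that the result ``follows by repeated application of the base-change formula in Lemma~\ref{lemma-base-change}.'' You have simply spelled out the induction and identified the relevant fiber product $\BS'\times_{\Flags_{\alpha_n}}\Flags\cong\BS^{\alpha_1,\dots,\alpha_n}$; the one small point worth making explicit is that the submersion hypothesis in Lemma~\ref{lemma-base-change} applies to the composite $\BS'\to\Flags\to\Flags_{\alpha_n}$, which is a submersion because $p_{n-1}\colon\BS'\to\Flags$ is (being an iterated pullback of the $\mathbb P^1$-bundles $\Flags\to\Flags_{\alpha_k}$).
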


\begin{proof}
This follows by repeated application of the base-change formula in Lemma~\ref{lemma-base-change}.
\qed
\end{proof}

 Now, fixing a basepoint $\pt\in \Flags$ as usual, fixed by $T\subseteq G$, define 
  \[
 \Cell^w_\pt = \{\, \frak b \in \Flags \, : \, (\frak b, \pt)\in \Cell^w\,\}.
 \]
   The $\Cell^w_\pt$ are the \emph{Bruhat cells} in $\Flags$.  They are affine $T$-spaces (of dimension equal to the length of $w$), they are pairwise disjoint, their union is $\Flags$, and the closure of each $\Cell^w_\pt$  is contained in the union of cells of lower dimension. This is the \emph{Bruhat decomposition}.  Using it, we find  that:

\begin{lemma}
The   group $\KK_G(\Flags,\Flags)$ is a free right and left module  of rank $|W|$ over the commutative ring $\K_G(\Flags)$.  \qed
\end{lemma}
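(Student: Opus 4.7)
The plan is to transport the question to topological $\K$-theory via Poincar\'e duality and induction, and then to exploit the Bruhat decomposition of the flag variety. Fix a $T$-fixed basepoint $\pt\in\Flags$, so that $\Flags\cong G/T$. Composing the Poincar\'e duality isomorphism of Proposition~\ref{prop-poincare} with the induction isomorphism \eqref{eq-simple-kk-ind-iso} applied to the second $\Flags$-factor yields
\[
\KK_G(\Flags,\Flags) \stackrel{\cong}{\longrightarrow} \K_G(\Flags\times\Flags) \stackrel{\cong}{\longrightarrow} \K_T(\Flags).
\]
By the same diagonal-class manipulation that appeared in the proof of Proposition~\ref{prop-poincare} (using that $\widehat{T\otimes[E]}=(E\boxtimes 1)\cdot \widehat T$ while $\widehat{[E]\otimes T}=(1\boxtimes E)\cdot\widehat T$), the right $\K_G(\Flags)$-module structure on $\KK_G(\Flags,\Flags)$ coming from the embedding \eqref{eq-vb-class2} corresponds under this composite isomorphism to the natural $R(T)$-module structure on $\K_T(\Flags)$ via pullback along $\Flags\to\pt$. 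Running the analogous argument with the roles of the two $\Flags$-factors swapped handles the left module structure symmetrically. It therefore suffices to show that $\K_T(\Flags)$ is free of rank $|W|$ over $R(T)=\K_T(\pt)$.

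For this I would enumerate $W=\{w_1,\dots,w_N\}$ so that lengths are weakly increasing, and set $F_k=\bigcup_{i\le k}\overline{\Cell^{w_i}_\pt}$. The properties of the Bruhat decomposition recalled above show that each $F_k$ is a closed $T$-invariant subspace of $\Flags$ and that $F_k\setminus F_{k-1}=\Cell^{w_k}_\pt$ is $T$-equivariantly a complex affine space. The equivariant Thom isomorphism then gives $\K_T^*(\Cell^{w_k}_\pt)\cong R(T)$, concentrated in even degree. Applying inductively the long exact sequence in equivariant $\K$-theory for the open-closed decomposition $F_{k-1}\subset F_k\supset \Cell^{w_k}_\pt$, the base fact $\K_T^1(\pt)=0$ together with the Thom computation on each stratum forces $\K_T^1(F_k)=0$ for all $k$, so the long exact sequences collapse to short exact sequences
\[
0 \longrightarrow \K_T(\Cell^{w_k}_\pt) \longrightarrow \K_T(F_k) \longrightarrow \K_T(F_{k-1}) \longrightarrow 0
\]
of $R(T)$-modules. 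These split because $\K_T(F_{k-1})$ is free over $R(T)$ by induction, and summing over $k$ produces a basis of $\K_T(\Flags)$ of cardinality $|W|$ indexed by the Weyl group.

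The main obstacle I anticipate is the bookkeeping in the first paragraph: matching the two Kasparov-product module structures on $\KK_G(\Flags,\Flags)$ with the two natural induction isomorphisms arising from the two $\Flags$-factors, and verifying that the resulting $R(T)$-module structure on $\K_T(\Flags)$ really is the tautological pullback-from-$\pt$ one rather than the (a priori different) one obtained by restricting $G$-equivariance to $T$-equivariance. Once that comparison is carried out, the inductive $\K$-theoretic half of the argument is standard; the only geometric inputs it requires -- the filtration of $\Flags$ by unions of Schubert cells and the $T$-equivariant affine structure of each cell -- were already implicit in the Bruhat decomposition used just before the lemma.
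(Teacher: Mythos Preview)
Your proposal is correct and is precisely the argument the paper has in mind: the paper offers no proof beyond the phrase ``Using it, we find that'' (referring to the Bruhat decomposition) followed by a \qed, and your two paragraphs spell out exactly the intended reduction via Poincar\'e duality and induction to the standard cell-by-cell computation of $\K_T(\Flags)$ as a free $R(T)$-module. The bookkeeping worry you flag is real but benign: one of the two $\K_G(\Flags)$-module structures does transport to the tautological pullback $R(T)$-action while the other transports to the action through the restriction $\K_G(\Flags)\to\K_T(\Flags)$, and your symmetry remark---swap the two $\Flags$-factors (equivalently, use the other induction isomorphism $\KK_G(\Flags,\Flags)\cong\KK_T(\Flags,\pt)$, as the paper does in the very next lemma)---handles both cases.
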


\begin{lemma} The   elements
$\Pi_{w}\in \KK_G(\Flags,\Flags)$ freely generate   $\KK_G(\Flags,\Flags)$ as a right or left $\K_G(\Flags)$-module, as $w$ ranges over the Weyl group.
\end{lemma}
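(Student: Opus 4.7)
The plan is to use Poincaré duality together with the Bruhat decomposition of $\Flags$. Since the previous lemma already tells us that $\KK_G(\Flags,\Flags)$ is a free module of rank $|W|$, it suffices to show that the $|W|$ classes $\Pi_w$ are linearly independent over $\K_G(\Flags)$, or equivalently that they span.

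First I would transport everything to the geometric side via the isomorphisms of Proposition~\ref{prop-poincare} and (\ref{eq-simple-kk-ind-iso}), obtaining
\[
\KK_G(\Flags,\Flags) \stackrel \cong \longrightarrow \K_G(\Flags \times \Flags) \stackrel \cong \longrightarrow \K_T(\Flags),
\]
where the second isomorphism is restriction to $\{\pt\}\times \Flags$. Under these identifications the right $\K_G(\Flags)$-module structure on $\KK_G(\Flags,\Flags)$ becomes the $\K_T(\pt)$-module structure on $\K_T(\Flags)$ via $\K_G(\Flags) \cong \K_T(\pt)$. The Bruhat decomposition $\Flags = \bigsqcup_w \Cell^w_\pt$ gives, by the standard long exact sequences attached to a $T$-equivariant cell filtration, a basis of $\K_T(\Flags)$ over $\K_T(\pt)$ indexed by $W$; this also yields the rank computation already invoked.

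Next I would compute the image of $\Pi_w$ under this chain of isomorphisms. Expanding the definition of Poincaré duality gives
\[
\widehat{\Pi_w} = (p_n\times \id)_!\,(p_0\times \id)^* \Delta_\Flags \in \K_G(\Flags\times \Flags),
\]
and one checks from the definition of $\Delta_\Flags$ that this is simply the wrong-way pushforward of $1_{\BS^\alpha}$ along $(p_n,p_0)\colon \BS^\alpha \to \Flags\times \Flags$, for any reduced word $\alpha$ for $w$. Restricting to $\{\pt\}\times \Flags$, this becomes the pushforward of $1$ along the map $\BS^\alpha_\pt \to \Flags$ landing in $\overline{\Cell^{w^{-1}}_\pt}$ (cf.\ Remark~\ref{rem-bs-rem}). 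The key geometric input is that for a \emph{reduced} word this map is a birational Bott--Samelson resolution of the Schubert variety, so the image class is supported on $\overline{\Cell^{w^{-1}}_\pt}$ and its restriction to the open stratum $\Cell^{w^{-1}}_\pt$ equals the Thom/fundamental class of that affine cell.

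Finally I would combine these computations to conclude. With respect to the partial order on $W$ coming from the closure relations among Bruhat cells, the transition matrix expressing $\{\widehat{\Pi_w}\}$ in the cell basis of $\K_T(\Flags)$ is triangular with unit (Thom class of $\Cell^{w^{-1}}_\pt$) diagonal entries; hence it is invertible, and the $\Pi_w$ freely generate. The same argument works on the left, using the roles of $p_0$ and $p_n$ reversed. The main obstacle I expect is the birationality/degree computation for the Bott--Samelson resolution on reduced words, which underwrites the unit triangularity and thereby the linear independence; this is essentially the content of the classical theorem of Bott and Samelson \cite{MR0105694}, applied $T$-equivariantly.
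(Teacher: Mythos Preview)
Your proposal is correct and follows essentially the same route as the paper's proof. The paper passes directly to $\KK_T(\Flags,\pt)$ via the induction/restriction isomorphism (\ref{eq-simple-kk-ind-iso}), whereas you first apply Poincar\'e duality to land in $\K_G(\Flags\times\Flags)$ and then restrict to $\K_T(\Flags)$; these are the same identification up to one further Poincar\'e duality on the $T$-equivariant side. In both cases the image of $\Pi_w$ is the pushforward of $1$ along the Bott--Samelson map $\BS^\alpha_\pt\to\Flags$, and the argument concludes by the same two geometric facts (the image is the Schubert closure and the map is an isomorphism over the open cell), yielding unit-triangularity against the Bruhat filtration. Your labeling of the target cell as $\Cell^{w^{-1}}_\pt$ versus the paper's $\Cell^w_\pt$ is a harmless bookkeeping discrepancy, since only the bijectivity of $w\mapsto\text{cell}$ matters for triangularity; and your treatment of the left-module case by swapping $p_0$ and $p_n$ parallels the paper's remark that the other side ``is similar.''
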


 \begin{proof} Let us consider the left-module result; the right module result is similar and in any case it can be deduced from the left module result by means of \eqref{eq-div-diff-ops2}.  Let $w\in W$, let 
 \[
 w = s_{\alpha_1}\cdots s_{\alpha_n}
 \]
 be a factorization of $w$ as a minimal-length product of simple reflections and let $\alpha=(\alpha_1,\dots, \alpha_n)$.  Under the induction/restriction isomorphism 
 \[
 \KK_G(\Flags,\Flags) \stackrel \cong \longrightarrow \KK_T(\Flags, \pt)
 \]
 the element $\Pi^w = \Pi^{\alpha}$ maps to 
 \begin{equation}
 \label{eq-image-bs}
 [\Flags \leftarrow \BS^{\alpha}_\pt] \otimes [\BS^{\alpha}_\pt \to \pt]
 \end{equation}
 Now the image of $\BS^\alpha_\pt$ in $\Flags$ is the closure of the Bruhat cell $\Cell^w_\pt$ (see \cite{MR0376703}), so the class \eqref{eq-image-bs} actually lies in the image of 
 \[
 \KK_T(\overline{\Cell^w_\pt}, \pt) \longrightarrow \KK_T(\Flags, \pt)
 \]
 Moreover the projection from the inverse image of $\Cell^w_\pt$ in $\BS^\alpha_\pt$ back down to $\Cell^w_\pt$ is an isomorphism (see \cite{MR0376703} again). So under  
 \[
  \KK_T(\overline{\Cell^w_\pt}, \pt) \longrightarrow  \KK_T( \Cell^w_\pt , \pt)  
 \]
 the class \eqref{eq-image-bs} is mapped to a generator.  The lemma follows from this and the Bruhat decomposition.
 \qed
 \end{proof}

Finally, let us mention Demazure's character formula  \cite{MR0430001,MR778124,MR782239} and its relation to the problem of computing the restriction map in equivariant $\K$-theory.

\begin{theorem}  If $\Pi\in \KK_G(\Flags,\Flags)$ is the class $[\Flags\to\pt]\otimes[\pt\leftarrow \Flags]$ from \textup{(}\ref{eq-def-of-pi}\textup{)}, then 
\[
	\Pi = \Pi_{\alpha_1,\dots, \alpha_n} = \Pi_{\alpha_1}\otimes \cdots \otimes \Pi_{\alpha_n}
	\in \KK_G(\Flags,\Flags),
\]
where $\alpha_1,\dots,\alpha_n$ is any list of simple roots for which the associated product of simple reflections is the longest element of the Weyl group.
\end{theorem}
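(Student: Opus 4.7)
The plan is to verify the equality via Poincar\'e duality (Proposition~\ref{prop-poincare}), by computing the images of both $\Pi$ and $\Pi_{\alpha_1,\dots,\alpha_n}$ in $\K_G(\Flags \times \Flags)$ and showing they both equal $1_{\Flags \times \Flags}$.

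First I would compute $\widehat\Pi$. Writing $\Pi$ as the transformation that sends $x \in \K_G(\Flags \times Z)$ to $1_\Flags \boxtimes (p_Z)_*(x)$, where $(p_Z)_*$ is the wrong-way pushforward along the projection $\Flags \times Z \to Z$, and applying this to the diagonal class $\Delta_\Flags \in \K_G(\Flags \times \Flags)$, the identity $p_2 \circ \delta = \id_\Flags$ together with functoriality of wrong-way maps gives $(p_2)_*(\Delta_\Flags) = 1_\Flags$ and hence $\widehat\Pi = 1_\Flags \boxtimes 1_\Flags = 1_{\Flags \times \Flags}$.

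Next I would compute the Poincar\'e dual of $\Pi_{\alpha_1,\dots,\alpha_n}$. Using the Bott-Samelson presentation $\Pi_{\alpha_1,\dots,\alpha_n} = [\Flags \stackrel{p_0}{\leftarrow} \BS^\alpha] \otimes [\BS^\alpha \stackrel{p_n}{\to} \Flags]$ supplied by the preceding lemma, and applying base change (Lemma~\ref{lemma-base-change}) to $\Delta_\Flags$, the Poincar\'e dual works out to
\[
\widehat{\Pi_{\alpha_1,\dots,\alpha_n}} = (p_n, p_0)_*\, 1_{\BS^\alpha} \in \K_G(\Flags \times \Flags),
\]
where $(p_n, p_0) \colon \BS^\alpha \to \Flags \times \Flags$ is the combined map. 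Since $\alpha$ is a reduced word for the longest element $w_0$, the image of $(p_n, p_0)$ is the closure of the Bruhat cell $\Cell^{w_0}$, which is dense in $\Flags \times \Flags$; moreover $(p_n, p_0)$ restricts to an isomorphism onto the open cell $\Cell^{w_0}$. So $(p_n, p_0)$ is a birational proper morphism of smooth complex $G$-manifolds.

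The main obstacle, and the heart of the proof, is then to show $(p_n, p_0)_*\, 1_{\BS^\alpha} = 1_{\Flags \times \Flags}$. By the Atiyah-Singer identification of wrong-way maps with derived holomorphic pushforwards (Section~\ref{sec-index-thm}), this is the statement that Bott-Samelson varieties give a rational resolution of $\Flags \times \Flags$, namely $(p_n, p_0)_* \mathcal O_{\BS^\alpha} = \mathcal O_{\Flags \times \Flags}$ together with the vanishing $R^i (p_n, p_0)_* \mathcal O_{\BS^\alpha} = 0$ for $i > 0$. I would verify this by induction on the length of $w_0$, exploiting the iterated $\mathbb P^1$-bundle structure of $\BS^\alpha$ together with the projection formula and the computation $R\pi_* \mathcal O = \mathcal O$ for a $\mathbb P^1$-bundle. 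An alternative route staying entirely within $\KK$-theory is to establish, inductively using Proposition~\ref{prop-divided-diff} and the commutation relation \eqref{eq-div-diff-ops2}, the identity $\Pi_{\alpha_1,\dots,\alpha_n} \otimes [\wdg^* T^*\Flags] = \sum_{w \in W} I_w$, which equals $\Pi \otimes [\wdg^* T^*\Flags]$ by Theorem~\ref{thm-weyl-kk}, and then cancel $[\wdg^* T^*\Flags]$, whose image $\prod_{\alpha > 0}(1 - e(-\alpha))$ is a non-zero-divisor in the free right $\K_G(\Flags)$-module $\KK_G(\Flags,\Flags)$.
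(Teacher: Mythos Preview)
The paper does not supply its own proof of this theorem; it simply records the result as Demazure's character formula and cites the original article of Demazure together with the corrected proofs of Andersen and Ramanan--Ramanathan.  So there is no ``paper's proof'' to compare against, and your proposal should be judged on its own merits.

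Your reduction via Poincar\'e duality is correct and is in fact the right way to see what the theorem amounts to: the computations $\widehat\Pi = 1_{\Flags\times\Flags}$ and $\widehat{\Pi_{\alpha_1,\dots,\alpha_n}} = (p_n,p_0)_*\,1_{\BS^\alpha}$ are both accurate, and the identification of the residual problem with the rational-resolution property $R(p_n,p_0)_*\,\mathcal O_{\BS^\alpha}\cong\mathcal O_{\Flags\times\Flags}$ is exactly how the literature (Andersen, Ramanan--Ramanathan) proves Demazure's formula.  So you have correctly located the heart of the matter.

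The gap is in your sketched proof of that rational-resolution statement.  The map $(p_n,p_0)\colon\BS^\alpha\to\Flags\times\Flags$ is \emph{not} an iterated $\mathbb P^1$-bundle, so ``$R\pi_*\mathcal O=\mathcal O$ for a $\mathbb P^1$-bundle plus induction on length'' does not apply directly to it; it is $p_0$ (or $p_n$) alone that has that structure.  A correct inductive argument exists, but it proceeds by resolving the intermediate Schubert varieties $\overline{\Cell^{w}}\subseteq\Flags\times\Flags$ one simple reflection at a time and requires either a normality/vanishing input or a Frobenius-splitting argument; Demazure's original attempt along naive inductive lines was in fact flawed, which is why the later references are needed.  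Your alternative $\KK$-theoretic route (prove $\Pi_{\alpha_1,\dots,\alpha_n}\otimes[\wdg^*T^*\Flags]=\sum_w I_w$ and cancel) is workable in principle---the cancellation step is fine once one knows $\KK_G(\Flags,\Flags)$ is free over $\K_G(\Flags)$---but the inductive verification of the identity using Proposition~\ref{prop-divided-diff} and the commutation relation is of comparable difficulty to the original problem and is not really easier than the sheaf-theoretic argument.
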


As pointed out in \cite{MR2653685}, Demazure's formula gives a characterization of the range of the restriction map 
\[
\K_G(Z)\longrightarrow \K_T(Z),
\]
for any $G$-space $Z$, or equivalently of the range of the pullback map
\begin{equation}
\label{eq-general-restriction}
\K_G(Z)\longrightarrow \K_G(\Flags\times Z).
\end{equation}

\begin{lemma} The right ideal in $\KK_G(\Flags,\Flags)$ generated by the $\Id - \Pi_\alpha$ as $\alpha$ ranges over the simple roots is equal to the right ideal generated by  $\Id - \Pi$.
\end{lemma}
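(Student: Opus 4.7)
Write $J \subseteq \KK_G(\Flags,\Flags)$ for the right ideal generated by the classes $\Id - \Pi_\alpha$ as $\alpha$ ranges over the simple roots, and $I$ for the right ideal generated by $\Id - \Pi$. The plan is to establish $J \subseteq I$ and $I \subseteq J$ by producing an explicit right-multiplicative factorisation in each direction.

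For $J \subseteq I$ the key fact is the identity
\[
\Pi_\alpha \otimes \Pi = \Pi \in \KK_G(\Flags,\Flags)
\]
for every simple root $\alpha$. To verify it I would expand the four-fold product defining $\Pi_\alpha \otimes \Pi$ and collapse the middle using two ingredients already at hand: functoriality of wrong-way classes gives $[\Flags \to \pt] = [\Flags \to \Flags_\alpha] \otimes [\Flags_\alpha \to \pt]$, while the identity $[\Flags_\alpha \leftarrow \Flags]\otimes [\Flags \to \Flags_\alpha] = \id_{\Flags_\alpha}$ established in the proof of Proposition~\ref{prop-bs-idempotent} cancels the intermediate pair of maps through $\Flags_\alpha$. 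Granted this identity, a direct computation yields
\[
(\Id-\Pi_\alpha)\otimes(\Id-\Pi) = \Id - \Pi_\alpha - \Pi + \Pi_\alpha\otimes\Pi = \Id - \Pi_\alpha,
\]
exhibiting each generator of $J$ as a right multiple of $\Id-\Pi$.

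For $I \subseteq J$ I would invoke Demazure's theorem above to write $\Pi = \Pi_{\alpha_1}\otimes\cdots\otimes\Pi_{\alpha_n}$ for a reduced expression of the longest element of $W$, and then apply the ring-theoretic telescoping
\[
\Id - A_1\otimes\cdots\otimes A_n = (\Id - A_n) + (\Id - A_1\otimes\cdots\otimes A_{n-1})\otimes A_n
\]
in $\KK_G(\Flags,\Flags)$ with $A_k = \Pi_{\alpha_k}$. Induction on $n$ then shows that $\Id - A_1\otimes\cdots\otimes A_n$ is a sum of right multiples of the generators $\Id-\Pi_{\alpha_k}$: the first summand is already a generator of $J$, and the second is a right multiple of an element of $J$ by the inductive hypothesis. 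Setting $n$ to the length of the longest Weyl element yields $\Id - \Pi \in J$.

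The one point requiring attention is that the telescoping must expand the product from the right, not the left: the more familiar decomposition $(\Id - A_1) + A_1\otimes(\Id - A_2\otimes\cdots\otimes A_n)$ would realise $\Id - \Pi$ as a sum of \emph{left} multiples of the $\Id - \Pi_\alpha$, which is insufficient for membership in the right ideal $J$. Otherwise both steps reduce to bookkeeping around the base-change and functoriality identities already assembled earlier in the paper.
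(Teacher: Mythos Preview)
Your argument for $I\subseteq J$ is correct and identical to the paper's: the same telescoping identity and the same induction.

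Your argument for $J\subseteq I$, however, has the sidedness backwards. The identity $\Pi_\alpha\otimes\Pi=\Pi$ is true, and your computation $(\Id-\Pi_\alpha)\otimes(\Id-\Pi)=\Id-\Pi_\alpha$ is correct, but this exhibits $\Id-\Pi_\alpha$ as an element of the \emph{left} ideal generated by $\Id-\Pi$ (it has the form $r\otimes(\Id-\Pi)$), not the right ideal. Ironically, this is exactly the trap you warn against in the other direction. The fix is immediate: run the same geometric collapse on the other side. Using pullback functoriality $[\pt\leftarrow\Flags]=[\pt\leftarrow\Flags_\alpha]\otimes[\Flags_\alpha\leftarrow\Flags]$ together with the same identity $[\Flags_\alpha\leftarrow\Flags]\otimes[\Flags\to\Flags_\alpha]=\id_{\Flags_\alpha}$ gives $\Pi\otimes\Pi_\alpha=\Pi$, and then
\[
(\Id-\Pi)\otimes(\Id-\Pi_\alpha)=\Id-\Pi-\Pi_\alpha+\Pi\otimes\Pi_\alpha=\Id-\Pi_\alpha
\]
places $\Id-\Pi_\alpha$ in the right ideal generated by $\Id-\Pi$, as required.

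Once repaired, your route to $\Pi\otimes\Pi_\alpha=\Pi$ is slightly different from the paper's. The paper obtains it from the Demazure factorisation $\Pi=\Pi_{\alpha_1}\otimes\cdots\otimes\Pi_{\alpha_n}$ together with idempotence of the last factor, and then invokes the combinatorial fact that any simple root can occur last in some reduced expression for the longest element. Your (corrected) argument bypasses that combinatorics entirely, deducing $\Pi\otimes\Pi_\alpha=\Pi$ directly from the geometry of the fibration $\Flags\to\Flags_\alpha$; this is a cleaner and more self-contained justification.
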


\begin{proof}
If $\alpha_1,\dots, \alpha_k$ is  {any} list of simple roots (of any length), then 
\[
	\Id  - \Pi_{\alpha_1,\dots, \alpha_k}= ( \Id - \Pi_{\alpha_k}) + ( \Id - \Pi_{\alpha_1,\dots, \alpha_{k-1}} )\otimes \Pi_{\alpha_k} 
\]
and so by an induction argument $\Id-\Pi_{\alpha_1,\dots, \alpha_k}$ is in the right-ideal of $\KK_G(\Flags,\Flags)$ generated by the $\Id-\Pi_{\alpha}$.   In particular, $\Id - \Pi$ lies in this ideal.

Conversely, we have 
\[
	\Id - \Pi_{\alpha_n}  =  ( \Id - \Pi )  -  ( \Id - \Pi )\Pi_{\alpha_n}
\]
since $\Pi_{\alpha_n}$ is an idempotent.  But any simple root $\alpha$ can play the role of $\alpha_n$ in the formula for $\Pi$ (that is, it can appear last in some reduced expression for the longest word in $W$).  We find therefore that if $\alpha$ is any simple root then $\Id -\Pi_{\alpha}$ lies in the right ideal generated by $\Id - \Pi$.    
\qed
\end{proof}

The range of (\ref{eq-general-restriction}) is equal to the range of the continuous transformation
\[
\Pi\colon K_G(\Flags\times Z)\longrightarrow \K_G(\Flags\times Z),
\]
and since $\Pi$ is an idempotent, this is the same as the kernel of $\Id-\Pi$.  The lemma implies that this is the same as the joint kernel of the operators $\Id - \Pi_\alpha$.

See also \cite{leung2011}, where the argument of \cite{MR2653685} is also considered within the context of  $\K\K$-theory, although not   as we did  above.

\section{The  Baum-Connes Conjecture}
\label{sec-bc}

In this final section we shall examine the Baum-Connes conjecture  \cite{MR1292018} for connected Lie groups in the light of the previous calculations.    We shall not go further here than reinterpret the Baum-Connes conjecture (for a reductive Lie group, and with commutative coefficients) in terms of equivariant $K$-theory for the flag variety and a Weyl-type character formula.  But we hope that eventually this may shed some light on strategies for proving the conjecture (which remains open even in the generality we are considering).  We would also like to use the reformulation as a first step towards Baum-Connes isomorphisms with those of geometric representation theory.  However these are topics for future work.

\subsection{Equivariant K-Theory for Noncompact Groups}
\label{sec-k-noncpt-g}

Let $G$ be a locally compact Hausdorff topological group and let $X$ be a locally compact Hausdorff space equipped with a continuous action of $G$.  There is more than one plausible definition of the $G$-equivariant $K$-theory of $X$.  We shall use the definition that is most closely related to the Baum-Connes conjecture. It requires a certain amount of functional and harmonic analysis.

Fix a left Haar measure on $G$.  The \emph{reduced $C^*$-algebra} of $G$, denoted here $C^*_\red (G)$, is the completion in the operator norm of the convolution algebra of continuous, compactly supported functions on $G$ as it acts on the Hilbert space $L^2(G)$.  See for example \cite{MR548006}.  One reason for preferring this particular Banach algebra completion is its close connection to Fourier theory: if $G$ is abelian, then $C^*_\red (G)$ is isometrically isomorphic via the Fourier transform to the algebra of continuous complex functions, vanishing at infinity, on the Pontrjagin dual of $G$.  Thus $C^*_\red(G)$ is relevant to problems where the \emph{topology} of the dual space is important.  

For nonamenable groups, $C^*_\red(G)$ reflects topological features of the \emph{reduced} unitary dual.  For reductive Lie groups this is the support of the Plancherel measure, or in other words the tempered unitary dual.

If $A$ is any $C^*$-algebra that is equipped with a continuous action of $G$ by $C^*$-algebra automorphisms, then the \emph{reduced crossed product algebra} $C^*_\red(G,A)$  is similarly a completion of the twisted convolution algebra of compactly supported and continuous functions from $G$ into $A$.  The convolution product is 
\[
f_1\star f_2 (g) = \int _G f_1(h) \alpha_h(f_2(h^{-1}g))\, dh,
\]
where $\alpha$ denotes the action of $G$ on $A$.  If $A$ is represented faithfully and isometrically on a Hilbert space $H$, then $C^*_\red(G,A)$ is represented faithfully and  isometrically on the Hilbert space completion $L^2(G,H)$ of the continuous and compactly supported functions from $G$ into $H$.  See \cite{MR548006} again.

We define the equivariant $\K$-theory of $A$ to be the $\K$-theory of the reduced crossed product algebra:
\begin{equation}
\label{eq-eq-k-theory1}
\K_G(A) = \K(C^*_{\red}(G,A)),
\end{equation}
and if $X$ is a locally compact $G$-space, then we define the equivariant $\K$-theory of $X$ to be the $\K$-theory of the reduced crossed product algebra in the case $A=C_0(X)$:
\begin{equation}
\label{eq-eq-k-theory2}
\K_G(X) = \K(C^*_{\red}(G,C_0(X))) .
\end{equation}
The definition is arranged so that $\K_G(\pt) = \K(C^*_\red(G))$.  As we noted,  this $K$-theory group should reflect aspects of the topology of the tempered dual,\footnote{The exemplar is the discrete series, which appear as isolated points in the tempered dual and are reflected simply and clearly in the $\K$-theory.  See the introduction.}  and the aim of the Baum-Connes theory is to use geometric techniques to understand these aspects more fully.

Unfortunately in most cases it is not easy to give a geometric description of cycles and generators for equivariant $\K$-theory akin to the standard vector bundle description of $\K$-theory in the case of compact groups.  Because of this it is considerably more difficult to define geometric operations on $\K$-theory groups (such as wrong-way maps, and so on) in the noncompact case.  It is here that Kasparov's equivariant $\K\K$-theory plays a crucial role, providing by means of functional-analytic constructions some operations in some circumstances. We shall invoke it in some very simple situations. As we shall see, the Baum-Connes conjecture asserts that a full suite of operations can   indeed be obtained by reduction to the case of compact groups, but of course it is at the moment still a conjecture.

The equivariant $\K$-theory group $\K_G(X)$ that we have defined is not in general a  ring.  It is however a module over the Grothendieck ring  $\K^\vb_G(X)$ of $G$-equivariant Hermitian vector bundles on $X$. In terms of Kasparov's analytic theory this may be explained as follows.  Kasparov defines a product  
\begin{equation}
\label{eq-kk-action}
\K_G(X)\otimes \K\K_G(X,X)\longrightarrow \K_G(X)
\end{equation}
in \cite[Sec 3.11]{MR918241}. Note that this involves Kasparov's functional-analytic $\K\K$-theory, not ours. Meanwhile there is a ring homomorphism 
\begin{equation}
\label{eq-kasp-kk-classes}
\K_G^\vb(X)\longrightarrow \K\K_G(X,X) .
\end{equation}
It is a counterpart of  the one defined in (\ref{eq-vb-class2}).  The combination of (\ref{eq-kk-action}) and (\ref{eq-kasp-kk-classes}) gives our the required module action
\begin{equation}
\label{eq-vb-action}
\K_G(X)\otimes \K_G^\vb(X)\longrightarrow \K_G(X) .
\end{equation}

There is a modest but nonetheless very important extension of (\ref{eq-vb-action}).  Suppose given a bounded and compactly supported complex of equivariant hermitian vector bundles
\begin{equation}
\label{eq-almost-eq-cplx}
E_0\longleftarrow E_1 \longleftarrow \cdots \longleftarrow E_k
\end{equation}
over a locally compact space $X$, as in (\ref{eq-cplx-over-w}), except that we do \emph{not} require the differentials to be $G$-equivariant.  Instead we require that the conjugate of a differential by an element   $g\in G$ differs from the differential by a bundle homomorphism that vanishes at infinity on $W$ (and varies continuously with $g$).     Each such \emph{almost equivariant Hermitian complex} also determines a class in Kasparov's $\K\K_G(X,X)$, and so acts as an operator on $\K_G(X)$.

Suppose now that $H$ is a closed subgroup of $G$.  There is in general no restriction homomorphism from $G$-equivariant to $H$-equivariant $\K$-theory as there was in the compact case, but there is however an 
induction isomorphism
\begin{equation}
\K_G(\Ind_H^G  X )  \cong \K_H( X)
\end{equation}
exactly analogous to (\ref{eq-ind-map}). 
This follows from the fact that $C^*_\red(G, \Ind_H^G X)$ and $C^*_\red (H,X)$ are in a canonical way Morita equivalent $C^*$-algebras.  More generally, if a locally compact space $W$ admits commuting free and proper actions of two  locally compact groups $H$ and $G$, then there   is a Morita equivalence between  crossed product algebras 
\[
C^*_\red(G, W/H)\quad \text{and}\quad C^*_\red ( H, W/G)
\]
See \cite{MR679709}.\footnote{The cited paper deals with \emph{maximal} crossed product algebras rather than the reduced crossed product algebras that we are considering here.  However we shall only use the theorem in situations where the two types of crossed products agree.}   This leads to a $\K$-theory isomorphism 
\[
\K_G(W/H)\cong \K_H(W/G) .
\]
   In the present case, we may apply this to $W= G\times X$.

 Finally, although equivariant $\K$-theory as we have defined it in this section is in general rather complicated, in the case of a compact group it reduces to the usual equivariant $\K$-theory that we considered earlier in the paper.  This is the content of the  Green-Julg theorem \cite{green,MR625361}.  

\subsection{Equivariant KK-Theory for Noncompact Groups}

We have made use of Kasparov's $\K\K$-theory in the previous section, although only as a shortcut to defining the action of hermitian equivariant vector bundles, or almost equivariant complexes of hermitian equivariant vector bundles, on our $C^*$-algebraic equivariant $\K$-theory.  Now we shall define a version of the $\KK$-theory that we considered for compact groups earlier in the paper.

\begin{definition} Denote by $\KK_G(X,Y)$ the abelian group of all natural transformations
\[
T_Z\colon \K_G(X\times Z)\longrightarrow \K_G(Y\times Z)
\]
with the property that diagrams of the form 
\[
\xymatrix{
\K_G(X\times Z) \ar[r]^{T_Z}\ar[d] & \K_G(Y\times Z)\ar[d] \\
\K_G(X\times Z\times W) \ar[r]_{T_{Z\times W}} & \K_G(Y\times Z\times W) ,
}
\]
in with the vertical maps are multiplication by some equivariant Hermitian vector bundle  or almost-equivariant complexes of Hermitian vector bundles on $W$, all commute.
 \end{definition}

\begin{remark}  Of course if $G$ is compact then this is the very same theory that we considered earlier, and hence it matches closely with Kasparov's theory as described in Section~\ref{sec-kasp-theory}.   In the noncompact case the difference between $\KK_G(X,Y)$ and $\K\K_G(X,Y)$ is usually quite a bit greater. There is a natural map 
\[
\K\K_G(X,Y)\longrightarrow \KK_G(X,Y)
\]
that is compatible with Kasparov products and actions on equivariant $\K$-theory, but  unfortunately it is frequently \emph{not} an isomorphism.  This points to the difficulty of using Kasparov's concretely defined equivariant $\K\K$-theory groups in computations of equivariant $\K$-theory, since it can and does happen that the action of a $\K\K_G$-class on $\K_G$-groups is an isomorphism, whereas the class is not itself an isomorphism in $\K\K_G$-theory.   (On the other hand $\KK_G$-theory is defined directly in terms of equivariant $\K_G$-theory, but its abstract definition reduces its value as a computational tool.)
\end{remark}

 \subsection{Reformulation of the Baum-Connes Conjecture}
\label{sec-bc-reformulation}

From now on assume that $G$ is a connected Lie group.  It contains a maximal compact subgroup $K$, and $K$ is unique up to conjugacy \cite[Chapter VII]{MR1661166}.
 Form the 
``symmetric space'' $S=G/K$ (we're interested in the case where $G$ is a reductive group, in which case $S$ really is a Riemannian symmetric space).

 In what follows in the final part of this paper we shall focus on the \emph{restriction homomorphism} 
\begin{equation}
\label{eq-kk-restr2}
\KK_G(X,Y)\longrightarrow \KK_K (X,Y).
\end{equation}
This is defined exactly as in (\ref{eq-ind-homo2}); as noted in Remark~\ref{rem-restr-in-kk}, the definition uses only  the induction isomorphism in equivariant $\K$-theory, which is still available to us in the noncompact context, and not   restriction, which isn't.

Baum and Connes define an \emph{assembly map}, involving Kasparov's $\K\K$-theory, as follows:
\[
\mu\colon \K\K_G (S, X)\longrightarrow \K_G(X)
\]
See \cite[Sec.\ 3]{MR1292018} (the assembly map is in fact defined for all $G$-$C^*$-algebras $A$, but we shall keep to the main case of interest, which is $A=C_0(X)$).

\begin{conjecture}[Baum and Connes]
\label{conj-bc}
Let $G$ be a connected Lie group with maximal compact subgroup $K$, and let $S=G/K$.  If $X$  is any  locally compact $G$-space, then 
the assembly map
\begin{equation}
\label{eqn-bc-formula}
\mu\colon \K\K_G (S, X)\longrightarrow \K_G(X)
\end{equation}
is an isomorphism of abelian groups.
\end{conjecture}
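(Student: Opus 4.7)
The full conjecture is famously open in the stated generality, so what follows is a plan of attack rather than an outline of a known proof. The strategy I would pursue is Kasparov's Dirac--dual-Dirac method \cite{MR918241}: one constructs two distinguished classes in analytic $\K\K$-theory---a \emph{Dirac class} $D \in \K\K_G(S, \pt)$ coming from the $G$-equivariant Dirac operator on the Riemannian symmetric space $S$, and a \emph{dual Dirac class} $\beta \in \K\K_G(\pt, S)$ built by exponentiating a $K$-equivariant Bott element on the tangent space at $eK$, the nonpositive curvature of $S$ ensuring that this exponentiation extends globally.

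The assembly map $\mu$ factors through Kasparov descent and cap product with $D$, so a candidate inverse $\nu \colon \K_G(X) \to \K\K_G(S, X)$ is produced by the analogous descent applied to $\beta$. The first concrete step is the Bott periodicity identity
\[
\beta \otimes D \,=\, 1 \,\in\, \K\K_G(\pt, \pt),
\]
the $G$-equivariant counterpart of Atiyah's identity \textup{(}\ref{eq-atiyah-embedding}\textup{)}. Granted this, a short formal calculation yields $\mu \circ \nu = \id$ on $\K_G(X)$, and the whole conjecture reduces to showing that the \emph{gamma element}
\[
\gamma \,:=\, D \otimes \beta \,\in\, \K\K_G(S, S)
\]
acts as the identity transformation on $\K_G(X)$ for every locally compact $G$-space $X$.

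\textbf{The main obstacle} is precisely this last step---and indeed it is the only step that obstructs a general proof. For groups with the Haagerup approximation property (including real-rank-one semisimple Lie groups such as $\operatorname{SO}(n,1)$ and $\operatorname{SU}(n,1)$), one exhibits an explicit homotopy from $\gamma$ to $1$ directly in $\K\K_G(\pt, \pt)$ and the conjecture follows immediately. For higher-rank reductive groups, property (T) rules out any such homotopy within $\K\K_G$, and one must exploit the gap between $\K\K_G$ and the larger theory $\KK_G$ introduced in this paper, in which $\gamma$ may act as the identity on $\K_G(X)$ even when it fails to be an identity class. In the reductive case my strategy would be to combine the paper's reformulation of Baum--Connes as the restriction isomorphism $\KK_G(X,Y) \cong \KK_K(X,Y)$ with the Weyl-theoretic calculations of Section~\ref{sec-weyl}, reducing the action of $\gamma$ on $\K_G(X)$, after restriction to $K$, to the explicit compact-group computation on the flag variety already completed in Theorem~\ref{thm-weyl-kk}.
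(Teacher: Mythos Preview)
The statement in question is a \emph{conjecture}, and the paper does not prove it; the paper explicitly remarks that it is open beyond rank-one simple groups and the case $X=\pt$. What the paper does instead is (a) recall Kasparov's Dirac and dual-Dirac classes and the known one-sided inverse they furnish, (b) prove in Lemma~\ref{lemma-equiv-bc} that the conjecture is \emph{equivalent} to the restriction functor $\Res_K^G\colon \KK_G(X,Y)\to \KK_K(X,Y)$ being an isomorphism, and (c) in Section~\ref{sec-bc-flags} \emph{assume} the conjecture and deduce the noncompact Weyl character formula from the compact one by inverting that restriction functor. So your opening disclaimer is exactly right, and your summary of the Dirac--dual-Dirac mechanism matches the paper's own account of the state of the art.

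The genuine gap is in your final paragraph. You propose to show that $\gamma$ acts as the identity by restricting to $K$ and invoking the compact-group Weyl computations of Theorem~\ref{thm-weyl-kk}. But restriction to $K$ is precisely the map whose invertibility is, by Lemma~\ref{lemma-equiv-bc}, equivalent to the conjecture you are trying to prove; knowing that $\Res_K^G(\gamma)$ acts as the identity (which is automatic, since for the compact group $K$ the Dirac and dual-Dirac classes are honest inverses) tells you nothing about $\gamma$ itself unless you already know restriction is injective. Likewise, the paper's Weyl-type identities in $\KK_G(\Flags,\Flags)$ for noncompact $G$ are obtained only \emph{after} assuming Baum--Connes and inverting $\Res_K^G$; they are consequences of the conjecture, not inputs to a proof of it. So the proposed route is circular. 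A non-circular argument would have to produce, directly in $G$-equivariant $\K$-theory and without appealing to the restriction isomorphism, some reason why $\gamma$ acts trivially---and for higher-rank groups no such mechanism is presently known.
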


\begin{remark}
This is what the experts would call the Baum-Connes conjecture with commutative coefficients---the coefficients being the $C^*$-algebra $C_0(X)$.  The case $X=\pt$ is known for all connected Lie groups; see \cite{MR2010742}.  The conjecture as stated is known for a limited class of groups---among the  noncompact simple Lie groups only those of rank one \cite{MR741223,MR1332908,MR1903759}.
\end{remark}

Now Kasparov showed that there is  a Poincar\'e duality isomorphism
\[
 \K\K_G (S, X)\stackrel\cong\longrightarrow \K_G (X\times  T S),
 \]
  and so the Baum-Connes assembly map can be viewed as  a homomorphism
 \begin{equation}
 \label{eq-pd-form-bc}
  \K_G(X\times TS) \longrightarrow \K_G (X) .
  \end{equation}
It is  implemented by a class   $D\in \K\K_G (T S, \pt)$, namely Kasparov's \emph{Dirac class} \cite[Sec.~4]{MR918241}.  It is a remarkable feature of the Kasparov theory that there is a class $ D{\check{\phantom{i}}}\in \K\K_G(\pt, TS)$, the \emph{dual Dirac class}, which induces a right inverse map
 \begin{equation}
 \label{eq-dual-dirac-bc}
   \K_G(X) \longrightarrow \K_G (X\times TS) 
 \end{equation}
to (\ref{eq-pd-form-bc}); see \cite[Sec.~5]{MR918241}.  Thus
  \[
D\otimes   D {\check{\phantom{i}}}= \Id_{TS}\in \K\K_G(TS,TS).
\] 
 The dual Dirac class is represented by an almost-equivariant complex of Hermtian vector bundles on $TS$. 
   
\begin{lemma}
\label{lemma-equiv-bc}
Conjecture~\ref{conj-bc} is equivalent to the assertion that the restriction map
\[
R=\Res_K^G \colon \KK_G(X,Y)\longrightarrow \KK_K(X,Y)
\]
is an isomorphism for all locally compact $G$-spaces $X$ and $Y$.
\end{lemma}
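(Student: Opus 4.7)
The plan is to factor the restriction map $R$ through Kasparov product with the dual Dirac class, so that the Baum--Connes conjecture becomes precisely the statement that this product is an isomorphism.

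First I would establish a noncompact analog of the hybrid isomorphism (\ref{eq-simple-kk-ind-iso}).  For a $G$-space $Y$, the map $[g,y]\mapsto (gK, gy)$ gives a $G$-equivariant homeomorphism $\Ind_K^G Y \cong S\times Y$, and the diagrammatic definition of restriction in (\ref{eq-ind-homo2}) has a direct analog for noncompact $G$, with the $\K$-theoretic restriction map (which is unavailable here) replaced by the Morita isomorphism $\K_G(A\times S)\cong \K_K(A)$ of Section~\ref{sec-k-noncpt-g}.  Running the analog of (\ref{eq-kk-indres-map}) yields a natural iso $\KK_G(X, Y\times S)\cong \KK_K(X, Y)$.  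Since $TS\to S$ is a $G$-equivariant $\mathrm{Spin}^c$ vector bundle (as used in Kasparov's construction of the Dirac class in \cite[Sec.~4]{MR918241}), the $G$-Thom isomorphism upgrades this to
\[
\Psi\colon \KK_G(X, Y\times TS) \stackrel{\cong}{\longrightarrow} \KK_K(X, Y).
\]

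The technical heart of the argument is the factorization
\[
R \;=\; \Psi \circ (\,-\,\otimes\, \check D\,)\colon \KK_G(X, Y) \longrightarrow \KK_K(X, Y),
\]
where $\check D \in \K\K_G(\pt, TS)$ is Kasparov's dual Dirac class, acting by external Kasparov product and viewed as an element of $\KK_G$ via the comparison map (\ref{eq-kk-to-kk}).  Both sides are natural transformations of $K$-equivariant $\K$-theory; I would compare them by transporting to $\K_G(X\times \Ind_K^G Z)$ for a test $K$-space $Z$ via Morita, where the required identity reduces to the fact that Kasparov product with $\check D$, followed by the Thom and Morita identifications, reproduces the induction-based formula for $\Res_K^G$ from (\ref{eq-ind-homo2}).

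Granted the factorization, the equivalence with BC is immediate.  In its Poincar\'e-dual form (\ref{eq-pd-form-bc}), BC states that $-\otimes D\colon \K_G(X\times TS)\to \K_G(X)$ is an iso for every $G$-space $X$; since the identity $\check D \otimes D = \Id_{TS}$ already holds in $\K\K_G$, BC amounts to $D\otimes \check D = \Id_\pt$, which is to say that $\check D$ is invertible in $\KK_G$.  By the factorization this is equivalent to $R$ being an iso for all $X$ and $Y$.  The converse uses a Yoneda-type observation that a natural iso $-\otimes \check D$ between representable $\KK_G$-functors must come from an invertible element.  The main obstacle is the factorization identity itself, where the purely categorical definition of restriction used in this paper must be matched with Kasparov's analytic construction of the assembly map; the substantive content is a diagram chase tracking the external Kasparov product with $\check D$ simultaneously through the Morita equivalence and the Thom isomorphism.
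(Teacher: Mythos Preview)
Your approach is elegant in outline but has a genuine gap at the very first step: the claimed isomorphism $\KK_G(X,\,Y\times S)\cong \KK_K(X,Y)$ does not follow from the argument you sketch.  In the compact case, the inverse to the hybrid map (\ref{eq-kk-indres-map}) is built from the $\K$-theory restriction homomorphism $\K_G(X\times Z)\to\K_K(X\times Z)$, and the paper is explicit (Section~\ref{sec-k-noncpt-g}) that no such map exists for noncompact $G$.  Concretely, an element of $\KK_G(X,\,Y\times S)$ amounts, after Morita, to a family of maps $\K_G(X\times Z)\to\K_K(Y\times Z)$ for $G$-spaces $Z$, whereas an element of $\KK_K(X,Y)$ is a family $\K_K(X\times W)\to\K_K(Y\times W)$ for $K$-spaces $W$.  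Passing from the former to the latter is fine (set $Z=\Ind_K^G W$), but the reverse passage would require $\K_G(X\times Z)\to\K_K(X\times Z)$, which is exactly the missing restriction.  Without $\Psi$ being an isomorphism, your factorization $R=\Psi\circ(-\otimes\check D)$, even if correct, does not let you conclude in either direction.

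The paper's proof avoids this trap by never asserting that the hybrid map is an isomorphism unconditionally.  For BC $\Rightarrow$ $R$ iso, it uses the hypothesis that $\check D$ is invertible to write down an explicit candidate $R^{-1}$ via the diagram involving $\check D$ and Morita, with $TS$ replaced by $S\times S$ through the exponential map; the invertibility of $\check D$ is what stands in for the missing $\K$-theory restriction.  For $R$ iso $\Rightarrow$ BC, the argument is purely categorical and does not touch the hybrid map at all: the Kasparov-theoretic classes $D$ and $\check D$ restrict to invertible elements in $\KK_K$ (this is elementary since $TS$ is $K$-equivariantly contractible), and since $R$ is a product-preserving isomorphism, invertibility lifts back to $\KK_G$.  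Your Yoneda maneuver for this direction is fine in spirit, but it presupposes the factorization through an invertible $\Psi$, which is precisely what is unavailable.
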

 
\begin{proof}
If $X$ and $Y$ are locally compact $G$-spaces, and if $T\in \KK_G(X,Y)$, then its restriction to a class $R( T)\in \KK_K(X,Y)$ is given by the diagram
\begin{equation*}
\xymatrix{
\K_K (X\times Z) \ar[r]^{R(T)_Z}\ar[d]_\cong&  \K_K(Y\times Z)\ar[d]^\cong \\
\K_G(\Ind_K^G (X\times Z))  & \K_G (\Ind_K^G(Y\times Z)) \\
\K_G (  X\times \Ind_K^G Z) \ar[u]^\cong\ar[r]_{T_{\Ind_K^G Z}} & \K_G (  Y\times \Ind_K^G Z)  \ar[u]_\cong.
}
\end{equation*}
as in (\ref{eq-ind-homo2}).
The Baum-Connes conjecture implies that the maps (\ref{eq-pd-form-bc}) and (\ref{eq-dual-dirac-bc}) induced from   Kasparov's $D\in \K\K_G(TS,\pt)$ and $ D{\check{\phantom{i}}}\in \K\K_G(\pt, TS)$ are isomorphisms.  But $TS$ is $G$-equivariantly homeomorphic to $S\times S$ via the exponential map, and so assuming the Baum-Connes conjecture there are isomorphisms 
\begin{equation}
\label{eq-new-diracs}
D\in \K\K_G(S\times S,\pt)\quad\text{and}\quad  D{\check{\phantom{i}}}\in \K\K_G(\pt, S\times S)
\end{equation}
The latter is given by an almost-equivariant complex of Hermitian vector bundles on $S\times S$, so its action on $\K$-theory commutes with the action of $\KK$-classes (in the sense of Lemma~\ref{lemma-thom-funct}).  Given these things,   the inverse to the restriction map may be defined by means of the diagram
\begin{equation*}
\xymatrix{
\K_G(X\times Z)\ar[r] ^{R^{-1}(T)_Z}\ar[d]_\cong & \K_G(Y\times Z) \ar[d]^\cong\\
\K_G(X\times S\times S \times Z)  & \K_G(Y\times S\times S\times Z) \\
\K_K(X\times S\times Z)\ar[r]_{T_{S\times Z}}\ar[u]^\cong  & \K_K(X\times S\times Z)\ar[u]_\cong
}
\end{equation*}
where the top vertical maps are induced from the class $ D{\check{\phantom{i}}}$ in (\ref{eq-new-diracs}).

Conversely, assume the restriction map is an isomorphism.  The restrictions of $D$ and $ D{\check{\phantom{i}}}$ to $K$-equivariant Kasparov theory  are isomorphisms, and hence they are isomorphisms in in $K$-equivariant $\KK$-theory.  By our assumption they are isomorphisms in $G$-equivariant $\KK$-theory too, which proves that the Baum-Connes assembly map in the form (\ref{eq-pd-form-bc}) is an isomorphism.
\qed
\end{proof}

\subsection{Baum-Connes and the Flag Variety}
\label{sec-bc-flags}
 
 Throughout this section let  $G\subseteq GL(n,\R)$ be a linear connected real reductive Lie group, let $\g$ be its complexified Lie algebra and let $\Flags$ be the flag variety associated to $\g$.  Let $K$ be a maximal compact subgroup in $G$.  
 
 There is a compact form $G_\comp$ of $G$ (that is, a compact connected subgroup of $GL(n,\C)$ with the same complexified Lie algebra $\g$) and we may choose it to contain $K$. Formulate the Weyl character formula for $G_\comp$ in equivariant $\KK$-theory, as in Theorem~\ref{thm-weyl-kk}.  We obtain identities (\ref{eq-weyleq1}) and (\ref{eq-weyleq2}) in $\KK_{G_\comp}$-theory, but using the restriction functor (\ref{eq-kk-resr-map}) from $G_\comp$-equivariant $\KK$-theory to $K$-equivariant $\KK$-theory we obtain the  formulas
 \begin{equation*}
[\pt \leftarrow \Flags] \otimes [\wdg^*T^*\Flags]\otimes [\Flags\to \pt] = |W|\cdot [\pt \to \pt] \in \KK_K(\pt,\pt) 
\end{equation*}
and
\begin{equation*}
[\Flags \to \pt] \otimes [\pt \leftarrow \Flags] \otimes [\wdg^*T^*\Flags] = \sum_{w\in W} [\Flags\stackrel w \to \Flags ]\in \KK_K(\Flags,\Flags) .
\end{equation*}
Here $\Flags$ continues to be the flag variety for $\g$ and $W$ is the Weyl group for $\g$ (not the compact group $K$).
 
 If we assume the Baum-Connes conjecture, then by inverting the restriction functor 
 \[
 \Res_K^G \colon \KK_G(X,Y)\longrightarrow \KK_K(X,Y)
 \]
 we obtain classes $[\Flags\to \pt]$ and so on in $G$-equivariant $\KK$-theory, and the same identities among them as above. In other words, we obtain a $\KK$-theoretic Weyl character formula   for the noncompact group $G$.
 
  Using the notation introduced in Section~\ref{sec-comparison}, we  obtain ``global sections'' and ``localization'' maps 
 \[
 \Gamma\colon \K_G(\Flags\times X)\longrightarrow \K_G(X)
 \quad \text{and}
\quad 
\Lambda \colon  \K_G(X)\longrightarrow \K_G(\Flags\times X) ,
\]
as well as ``intertwining operators''
\[
I_w\colon \K_G (\Flags \times X) \longrightarrow \K_G(\Flags\times X)
\]
with
\[
\Lambda\otimes \Gamma = |W|\cdot \Id \colon \K_G(X) \longrightarrow \K_G(X)
\]
and 
\[
\Gamma\otimes \Lambda = \sum_{w\in W} I_w\colon \K_G( \Flags \times X)\longrightarrow \K_G(\Flags\times X).
\]
 After inverting $|W|$ the   formulas identify $\K_G(X)$ with the $W$-invariant part for $\K_G(\Flags\times X)$, and indeed they imply   Baum and Connes'  conjectural formula (\ref{eqn-bc-formula}) for $\K_G(X)$ (after inverting $|W|$ again).  A more precise (but maybe less useful) statement can be obtained using the Demazure character formula, as in Section~\ref{sec-bott-sam}.

\bibliographystyle{alpha}
\bibliography{References.bib}

\end{document}